\documentclass[a4paper,10pt]{article}
\usepackage[utf8]{inputenc}
\usepackage{amsmath,amsfonts,amssymb,amsthm,enumerate}
\usepackage[mathscr]{euscript}

\usepackage{authblk}
\usepackage[english]{babel}
\usepackage[dvips]{graphicx}
\usepackage[pdftex]{hyperref}

\newtheorem{theorem}{Theorem}[section]

\newtheorem{definition}{Definition}[section]
\newtheorem{example}{Example}[section]

\newtheorem{lemma}{Lemma}[section]

\newtheorem{proposition}{Proposition}[section]
\newtheorem{remark}{Remark}[section]

\newcommand{\diam}{\operatorname{diam}}

\newcommand{\V}{\mathcal{V}}

\newcommand{\U}{\mathcal{U}}

\newcommand{\bc}{\begin{center}}
	\newcommand{\ec}{\end{center}}

\def\p{{\mathbb{P}}}
\def\N{{\mathbb{N}}}

\def\Z{{\mathbb{Z}}}

\def\||{\parallel}
\def\ds{\displaystyle}
\def\dfrac{\ds \frac}

\title{Topological Stability of Random $cw$-Expansive Systems with Shadowing}

\author[1]{M. Oliveira}
\author[2]{R. Bilbao}
\author[3]{E. Santana}

\affil[1]{Universidade Estadual do Maranhão, São Luís, Brazil}
\affil[2]{Universidad Pedagógica y Tecnológica de Colombia, Boyacá, Colombia}
\affil[3]{Universidade Federal de Alagoas, Penedo, Brazil}

\begin{document}
\maketitle

\begin{abstract}
This paper investigates the topological stability of random dynamical systems . Our main goal is to extend the classical result of Walters \cite{Walters77} to the random setting by employing the notions of random continuum-wise expansivity and shadowing property. We prove that any random dynamical system that is random $cw$-expansive and satisfies the random shadowing property is randomly topologically stable. Furthermore, in the random setting we establish the topological invariance of these properties under conjugacy and analyze the relationship between topological transitivity and the periodic shadowing property.
\end{abstract}

\section{Introduction}\label{Introduction}
\vspace{0.50cm}

Expansiveness and shadowing are fundamental dynamical properties in the study of systems defined on metric spaces. These notions play a central role in several areas of mathematics, including topological dynamics, ergodic theory, symbolic dynamics, and their interactions with probability theory. A detailed account of expansiveness and shadowing in deterministic settings can be found in \cite{Walters78, AokiHiraide, pilyugin}.

The study of stability in dynamical systems has been a central topic since the foundational works of Anosov and Bowen. Topological stability, in particular, describes the robustness of the qualitative behavior of a system under small perturbations. An important result in this direction was established by Walters \cite{Walters77}, who proved that a homeomorphism on a compact metric space possessing both expansiveness and the shadowing property is topologically stable. Since then, topological stability has been extensively investigated in different contexts, revealing deep connections with hyperbolicity, Axiom A systems, and structural stability \cite{Robbin, Robinson}.

The concept of expansiveness, originally introduced for homeomorphisms on compact metric spaces \cite{Utz}, has several important generalizations. Among these are pointwise expansiveness \cite{Reddy}, entropy-expansiveness \cite{Bowen}, measure expansiveness \cite{MoralesPacifico}, and $n$-expansiveness \cite{Morales}. A particularly relevant extension is the notion of continuum-wise expansiveness ($cw$-expansiveness), introduced by Kato \cite{Kato}. In contrast classical expansiveness, which concerns the separation of individual points, $cw$-expansiveness only requires that the diameter of every non-trivial continuum eventually exceeds a fixed positive constant. This notion provides a meaningful weakening of classical expansiveness while preserving several important dynamical features.

In recent decades, considerable attempt has been devoted to extending deterministic dynamical properties to the framework of Random Dynamical Systems (RDS). As systematically developed by Arnold \cite{Arnold}, RDS provide a robust mathematical framework for describing systems influenced by external noise or random perturbations, naturally arising in many applications. Within this setting, classical notions such as expansiveness, hyperbolicity, and shadowing have been adapted to the random context; see, for instance, \cite{Kifer2000, GundlachKifer, Liu98}.

Random expansiveness was introduced to describe systems in which nearby orbits separate along random trajectories \cite{Kifer2000}. More recently, the notion of random $cw$-expansiveness was proposed as a natural extension of continuum-wise expansiveness to the random setting \cite{BilbaoOliveiraSantana2024}, allowing expansive behavior to be studied along continua under random dynamics. However, despite its recent introduction, the dynamical consequences of random $cw$-expansiveness remain largely unexplored, particularly with respect to robustness and stability under perturbations.

Recently, important advances in the stability theory of random dynamical systems were obtained in \cite{Azjargal}, where it was shown that the combination of $\omega$-expansiveness and the shadowing property implies random topological stability. Motivated by Walters' classical theorem \cite{Walters77}, the main purpose of this work is to develop a stability theory for random $cw$-expansive systems. Our work extends this result by replacing the assumption of random $\omega$-expansiveness with the more general notion of random $cw$-expansiveness. Since $cw$-expansiveness is strictly weaker than pointwise expansiveness, our approach applies to a broader class of random dynamical systems while preserving topological stability under the shadowing property.

More precisely, we prove that random dynamical systems that are random $cw$-expansive and satisfy the random shadowing property are randomly topologically stable, there by establishing a random counterpart of Walters’ theorem in the continuum-wise setting. Since continuum-wise expansiveness is strictly weaker than classical expansiveness, our results apply to a broader class of random dynamical systems for which classical expansiveness may fail to hold. In this way, our work substantially enlarges the scope of random topological stability results.

Beyond stability, we investigate structural and topological aspects of this class of systems. In particular, we prove that random $cw$-expansiveness and the random shadowing property are preserved under random conjugacies, establish uniqueness of shadows, and analyze the relationship between topological transitivity and the periodic shadowing property in the random setting. The latter is especially relevant for understanding the approximation of recurrent behavior and periodic structures in random dynamics.

The paper is organized as follows. In Section 2, we present the preliminaries on random dynamical systems, bundle maps, and random conjugacies. Section 3 introduces random cw-expansiveness and establishes its main properties, including invariance under conjugacy and its relation with sensitivity. In Section 4, we study random shadowing, uniqueness of shadows, periodic shadowing, and prove the random topological stability theorem for cw-expansive systems with shadowing. Finally, Section 5 provides examples illustrating the main results.

\section{Random Dynamical Systems}

Let $(X,d)$ be a compact metric space and let $\mathcal{B}$ denote its
Borel $\sigma$-algebra. Let $(\Omega,\mathcal{F},\mathbb{P})$ be a
probability space and let
$\theta:\Omega\to\Omega$ be an invertible, measurable and
$\mathbb P$-preserving transformation.

Let $\mathcal U\subset\Omega\times X$ be an $\mathcal F\otimes\mathcal B$-measurable set whose fibers 
\[
U_w=\{x\in X:(w,x)\in\mathcal U\},
\]
are nonempty compact subsets of $X$ for every $w\in\Omega$.

A bundle random homeomorphism
$f=\{f_w\}_{w\in\Omega}$ over
$(\Omega,\mathcal F,\mathbb P,\theta)$
is given by a family of homeomorphisms
$f_w:U_w\longrightarrow U_{\theta(w)}$, 
such that the maps $(w,x)\longmapsto f_w(x)$ and $(w,x)\longmapsto f_w^{-1}(x)$ are jointly measurable. 

For each $w\in\Omega$, define $f_w^0=\mathrm{Id}$, and for every $n\ge1$, \[f_w^n=
f_{\theta^{n-1}(w)}
\circ
f_{\theta^{n-2}(w)}
\circ\cdots\circ
f_w.\]

For $n\ge1$, define
\[
f_w^{-n}
=
(f_{\theta^{-n}(w)})^{-1}
\circ
(f_{\theta^{-n+1}(w)})^{-1}
\circ\cdots\circ
(f_{\theta^{-1}(w)})^{-1}.
\]

The associated skew-product map
\[
F:\mathcal U\longrightarrow\mathcal U,
\]
is defined by
\[
F(w,x)=
(\theta(w),f_w(x)).
\]

\begin{definition}
A random set in $\mathcal U$ is a measurable subset
$A\subset\mathcal U$. For each $w\in\Omega$, its fiber is defined by
\[
A_w=\{x\in U_w:(w,x)\in A\}.
\]

We say that $A$ is a random open set if $A_w$ is an open subset of
$U_w$ for every $w\in\Omega$.
\end{definition}

Throughout the paper, the abbreviation RDS stands for random dynamical system.

Topological conjugacy is a fundamental notion for the qualitative equivalence of dynamical systems. In the random setting, this concept incorporates the dependence on the underlying driving system $\theta$. Random topological conjugacy was developed by Liu \cite{Liu98} in the context of random perturbations of hyperbolic systems. We formally define this equivalence.

\begin{definition}
Let
\(F:\mathcal U\to\mathcal U\)
and
\(G:\mathcal V\to\mathcal V\)
be random dynamical systems over
\((\Omega,\mathcal F,\mathbb P,\theta)\).

We say that \(F_{\mathcal U}\) and \(G_{\mathcal V}\) are semiconjugate if there exists a measurable bundle map $\pi(w,x) \to (w,\pi_w(x))$, whose fiber maps $\pi_w:U_w\to V_w$ are continuous surjections and satisfy $\pi_{\theta(w)}\circ f_w
=
g_w\circ\pi_w$ for every \(w\in\Omega\).

If each \(\pi_w\) is a homeomorphism, then \(F_{\mathcal U}\) and \(G_{\mathcal V}\) are said to be conjugate.
\end{definition}

A concrete example illustrating this construction can be found in Liu~\cite{Liu98}, Example 2.5.

\section{Random cw-expansivity}
In this section, we introduce random continuum-wise expansiveness, a random counterpart of the notion
introduced by Kato \cite{Kato}, and establish some of its basic
properties.

For any positive random variable $\delta=\delta(w)$,  points $x,y\in U_{w}$ and numbers $m,n\in \N$ with $m<n$, we introduce a family of metrics on each fiber $U_{w}$ by 
\[
d^{w}_{\delta; \, m,n}(x,y) := \max_{m\leq k \leq n}\left\lbrace \dfrac{d(f^{k}_{w}(x),f^{k}_{w}(y))}{\delta(\theta^{k}(w))} \right\rbrace.
\]
We denote $d^{w}_{\delta; n} = d^{w}_{\delta; \, 0,n}$ and $d^{w}_{\delta; \pm n} = d^{w}_{\delta; \, -n,n}$. Given a point $x\in U_{w}$ we denote by $B_{w}[x,\delta,n]$ and $B_{w}[x,\delta,\pm n]$ the closed unit balls with respect to the metrics $d^{w}_{\delta; \, n}$ and $d^{w}_{\delta; \, \pm n}$, that is, 
\[
B_{w}[x,\delta,n] = \{y\in U_{w}; \ d(f^{k}_{w}(x),f^{k}_{w}(y))\leq \delta(\theta^{k}(w)), \  0\leq k \leq n-1  \},
\]
and
\[
B_{w}[x,\delta,\pm n] = \{y\in U_{w}; \ d(f^{k}_{w}(x),f^{k}_{w}(y))\leq \delta(\theta^{k}(w)), \ -n + 1 \leq k \leq n-1  \}.
\]
We write $B_{w}(x,\delta) := B_{w}[x,\delta,1]$ and denote $\Gamma_{\delta}(w,x) = \bigcap_{n\geq 1} B_{w}[x,\delta,\pm n] $ which consists of points whose entire orbits remain
$\delta$-close to the orbit of $x$.

The notion of random expansiveness was introduced in \cite{Kifer2000}.
\begin{definition}
A random dynamical system $F_{\mathcal{U}}$ is said to be random expansive  if there exists a positive random variable $\delta=\delta(w)$, called an expansivity characteristic, such that $\Gamma_{\delta}(w,x) = \{x\}$ for $\mathbb P$-almost every $w\in\Omega$ and every $x\in U_w$. 
\end{definition}

The notion of continuum-wise expansiveness was extended to the random setting in \cite{BilbaoOliveiraSantana2024}. Recall that a \emph{continuum} is a nonempty compact connected set. A continuum is said to be \emph{nontrivial} if it contains more than one point.

\begin{definition}
A random dynamical system  $F_{\U}$ will be called random continuum-wise expansive if there exists a  positive mensurable random variable $\delta=\delta(w)$ such that for $\mathbb{P}$-a.e $w \in \Omega$ and every non-trivial continuum $C \subset U_{w}$ there exists $n \in \mathbb{Z}$ such that $\text{diam}(f_{w}^{n}(C)) > \delta(\theta^{n}(w))$. The random variable $\delta$ is called a $cw$-expansivity characteristic.  
\end{definition}

The following lemma provides a useful characterization of random $cw$-expansivity. 
\begin{lemma}[\cite{BilbaoOliveiraSantana2024}]\label{lemma1}
The following statements are equivalent:
\begin{itemize}
  \item[(i)] The random dynamical system $F_{\U}$ is random continuum-wise expansive. 
  
  \item[(ii)] There exists a random variable $\delta : \Omega \to (0,\infty)$ such that for $\mathbb{P}$-a.e $w\in \Omega$ and all $x\in U_{w}$, the set $\Gamma_{\delta}(w,x)$ contains no nontrivial continuum.
\end{itemize}
\end{lemma}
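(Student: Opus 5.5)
We prove both implications directly from the definitions, comparing the diameters of the iterates of a continuum with the fiberwise thresholds $\delta(\theta^n(w))$. The same random variable $\delta$ works in both directions. The only subtlety is the strict versus non-strict inequality: the cw-expansivity definition asks for $\diam(f^n_w(C))>\delta(\theta^n(w))$, whereas $\Gamma_\delta$ is defined through the closed condition $d(f^k_w(x),f^k_w(y))\le\delta(\theta^k(w))$. Halving $\delta$ where needed removes any boundary issue.

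\textbf{(i)$\Rightarrow$(ii).} Let $\delta$ be a cw-expansivity characteristic and put $\delta'=\delta/2$. Fix $w$ in the full-measure set given by (i) and any $x\in U_w$. Suppose, for contradiction, that $C\subset\Gamma_{\delta'}(w,x)$ is a nontrivial continuum.

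For every $n\in\mathbb Z$ and all $y,z\in C$, the triangle inequality through $f^n_w(x)$ gives
\[
d(f^n_w(y),f^n_w(z))\le 2\delta'(\theta^n(w))=\delta(\theta^n(w)).
\]
Hence $\diam(f^n_w(C))\le\delta(\theta^n(w))$ for all $n\in\mathbb Z$, contradicting (i). So (ii) holds with the random variable $\delta'$, which is positive and measurable.

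\textbf{(ii)$\Rightarrow$(i).} Let $\delta$ be as in (ii). Fix $w$ in the full-measure set and a nontrivial continuum $C\subset U_w$. Suppose that $\diam(f^n_w(C))\le\delta(\theta^n(w))$ for every $n\in\mathbb Z$.

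Pick any $x\in C$. For every $y\in C$ and every $k$,
\[
d(f^k_w(x),f^k_w(y))\le\diam(f^k_w(C))\le\delta(\theta^k(w)),
\]
so $y\in B_w[x,\delta,\pm n]$ for all $n$, and therefore $C\subset\Gamma_\delta(w,x)$. This contradicts (ii). Hence some $n$ satisfies $\diam(f^n_w(C))>\delta(\theta^n(w))$, which is (i) with the same $\delta$. The set $C$ remains a continuum inside $U_w$, so no compactness or connectedness issue arises.

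There is no real obstacle. The only points to check are the strict versus non-strict inequality, handled by the factor $1/2$, and that the indices $k$ range over all of $\mathbb Z$ in $\Gamma_\delta$, which matches the definition of $B_w[x,\delta,\pm n]$ as $n\to\infty$. Measurability and positivity of $\delta/2$ are immediate.
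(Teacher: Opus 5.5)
Your argument is correct. The paper does not prove this lemma; it is quoted from the earlier work of Bilbao, Oliveira and Santana, so there is no in-paper proof to compare against. Your two direct implications are the standard argument, and the paper itself repeats your (ii)$\Rightarrow$(i) computation inside the proof of Lemma~\ref{lemma2}.

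One correction to your commentary concerns the factor $1/2$ in (i)$\Rightarrow$(ii). It is not about strict versus non-strict inequalities. The condition ``$\diam(f^n_w(C))\le\delta(\theta^n(w))$ for all $n\in\mathbb Z$'' is exactly the negation of (i), so there is no boundary issue. The halving is needed because of the triangle inequality: points of $\Gamma_\delta(w,x)$ are only $\delta$-close to the orbit of $x$. A continuum inside $\Gamma_\delta(w,x)$ is therefore only guaranteed iterates of diameter at most $2\delta(\theta^n(w))$.

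For the same reason, your opening claim that ``the same random variable $\delta$ works in both directions'' is not what you prove, since (i)$\Rightarrow$(ii) passes to $\delta/2$. That suffices for the lemma as stated. Note, however, that the paper later applies the lemma in the form ``$\Gamma_e(w,x)$ contains no nontrivial continuum'' with $e$ the $cw$-characteristic itself. Your proof supports this only with $e/2$ in place of $e$.
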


The following result establishes that random $cw$-expansiveness is a topological invariant, being preserved under random conjugacy.

\begin{proposition}
Let $F_{\U}$ and $G_{\V}$ be two conjugate bundle random dynamical systems. If $F_{\U}$ is random cw-expansive, then so is $G_{\V}$.
\end{proposition}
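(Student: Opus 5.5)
Let $\pi=\{\pi_w\}$ be the conjugacy, so each $\pi_w:U_w\to V_w$ is a homeomorphism and $\pi_{\theta(w)}\circ f_w=g_w\circ\pi_w$. We will show directly from the definition that $G_{\V}$ is random $cw$-expansive. The characteristic will be obtained by transporting a $cw$-expansivity characteristic $\delta$ of $F_{\U}$ through the uniform continuity of the inverse fiber maps $\pi_w^{-1}$. All fibers are compact, being closed subsets of the compact space $X$.

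\textbf{Step 1: iterate the conjugacy.} By induction on $n$, and using invertibility for negative $n$, we get
\[
\pi_{\theta^n(w)}\circ f_w^n=g_w^n\circ\pi_w \qquad\text{for all } n\in\Z .
\]
For negative $n$, note that $g_{\theta^{-1}(w)}^{-1}\circ\pi_w=\pi_{\theta^{-1}(w)}\circ f_{\theta^{-1}(w)}^{-1}$, and iterate this identity.

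\textbf{Step 2: choose the characteristic.} For each $w$, the map $\pi_w^{-1}:V_w\to U_w$ is uniformly continuous because $V_w$ is compact. Define
\[
\varepsilon(w)=\sup\Bigl\{\varepsilon\in(0,1]:\ a,b\in V_w,\ d(a,b)\le\varepsilon \Rightarrow d(\pi_w^{-1}(a),\pi_w^{-1}(b))\le \delta(w)\Bigr\}.
\]
This is positive by uniform continuity. To keep the inequality strict in Step 3, we use $\varepsilon(w)/2$ instead; then
\[
d(a,b)\le\varepsilon(w)/2\ \Longrightarrow\ d(\pi_w^{-1}a,\pi_w^{-1}b)\le\delta(w).
\]
\textbf{Step 3: the expansivity argument.} Take $w$ in the full-measure set where $\delta$ works for $F_{\U}$, and let $C\subset V_w$ be a nontrivial continuum. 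Then $D=\pi_w^{-1}(C)$ is a nontrivial continuum in $U_w$. Hence there is $n\in\Z$ with $\diam f_w^n(D)>\delta(\theta^n(w))$. By Step 1, $f_w^n(D)=\pi_{\theta^n(w)}^{-1}(g_w^n(C))$. If we had $\diam g_w^n(C)\le\varepsilon(\theta^n(w))/2$, Step 2 would force $\diam f_w^n(D)\le\delta(\theta^n(w))$, a contradiction. Therefore $\diam g_w^n(C)>\varepsilon(\theta^n(w))/2$, and $\varepsilon/2$ is a $cw$-expansivity characteristic for $G_{\V}$.

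\textbf{Main obstacle: measurability of $\varepsilon$.} The definition requires a measurable random variable, so this is the only delicate point. I would first try to write the supremum as a limit over rational $\varepsilon$. For each rational $\varepsilon$, the condition is a statement about all pairs $(a,b)$ in the fiber. Since $V$ is a measurable set with compact fibers, one can use a countable dense family of measurable selections of $V_w$ (Castaing representation) together with the joint measurability of $(w,x)\mapsto\pi_w^{-1}(x)$. With these, the condition reduces to countably many measurable conditions, so the set of $w$ where it holds is measurable, and hence $\varepsilon$ is measurable. If this proves cumbersome, there is a fallback: replace $\varepsilon$ by a measurable positive minorant, which exists by a standard measurable selection argument. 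Alternatively, one can read the definition as only requiring positivity, in the spirit of how \cite{BilbaoOliveiraSantana2024} treats characteristics.
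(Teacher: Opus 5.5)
Your proof is correct and is essentially the paper's argument. Both define the characteristic for $G_{\V}$ through the uniform continuity of $\pi_w^{-1}$, then use $f_w^n\circ\pi_w^{-1}=\pi_{\theta^n(w)}^{-1}\circ g_w^n$ to pull a continuum that stays small under $g_w^n$ back to one that stays small under $f_w^n$. The paper phrases this through the sets $\Gamma_\alpha$ and Lemma~\ref{lemma1}, whereas you work directly with diameters. This avoids the paper's tacit use of the same characteristic in (i) and (ii) of that lemma and its strict versus non-strict inequality mismatch, and you also address the measurability of the new characteristic, which the paper only asserts.
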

\begin{proof}
Let $\varepsilon:\Omega\to(0,\infty)$ be a cw-expansivity characteristic for $F_{\mathcal U}$. By hypothesis, there exists a family of homeomorphisms $\{\pi_w: U_w \to V_w\}_{w\in \Omega}$ such that $f_{w}^n\circ \pi_{w}^{-1} = \pi^{-1}_{\theta^{n}(w)}\circ g^{n}_{w}$. 
Since each fiber $V_w$ is compact and $\pi_w^{-1}:V_w\to U_w$ is uniformly continuous, for every $\varepsilon(w)>0$ there exists $\alpha(w)>0$ such that if $d(x,y)<\alpha(w)$, then $d(\pi_w^{-1}(x),\pi_w^{-1}(y))
<
\varepsilon(w)$ for all $x,y\in V_w$. Moreover, since $w\mapsto\pi_w$ is a measurable bundle map and the fibers are compact, we may choose $\alpha:\Omega\to(0,\infty)$ to be measurable.

Fix $x\in V_w$ and let
$\Delta_w\subset\Gamma_\alpha^G(w,x)$ be a continuum. Given $y\in \Delta_w$, we have by conjugacy and uniform continuity
\begin{eqnarray}
d(f^{n}_{w}(\pi_{w}^{-1}(x)),f^{n}_{w}(\pi_{w}^{-1}(y))) & = &d(\pi_{\theta^{n}(w)}^{-1}(g^{n}_{w}(x)),\pi_{\theta^{n}(w)}^{-1}(g^{n}_{w}(y))  \nonumber \\
 &\leq & \epsilon (\theta^{n}(w)).
\end{eqnarray}
This shows that the set $\pi_w^{-1}(\Delta_w)$ is a continuum contained in $\Gamma_{\epsilon}^F(w, \pi_w^{-1}(x))$. Since $F_{\U}$ is random cw-expansive, Lemma \ref{lemma1} implies that $\pi_w^{-1}(\Delta_w)$ must be a singleton. Because $\pi_w$ is a homeomorphism, $\Delta_w$ must also be a singleton. Therefore, $G_{\V}$ is random cw-expansive with characteristic $\alpha(w)$.
\end{proof}

The next lemma shows that every continuum with diameter bounded away from zero must exceed the expansivity scale after finitely many iterates.

\begin{lemma}\label{lemma2}
Let $F_{\mathcal U}$ be a random cw-expansive system with expansivity characteristic $e = e(w)$. Given a positive random variable $\varepsilon = \varepsilon(w)$, for $\mathbb P$-almost every $w\in\Omega$ there exists an integer $n(w)\ge1$ such that
\[
\max_{|i|\le n(w)}
\left\{
\operatorname{diam}(f_w^i(A))
-
\frac12e(\theta^i(w))
\right\}
>0
\]
for every continuum $A\subset U_w$
with $\operatorname{diam}(A)\ge\varepsilon(w)$.
\end{lemma}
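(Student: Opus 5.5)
We argue by contradiction on a fixed fiber, using compactness of the hyperspace of subcontinua of $U_w$ with the Hausdorff metric. Fix $w$ in the full-measure set where the definition of random $cw$-expansiveness holds with characteristic $e$. We will show the conclusion holds for this $w$, with the characteristic $\tfrac12 e$ playing the role of the threshold. Only the fiber $U_w$ and the finitely many fibers $U_{\theta^i(w)}$, $|i|\le n$, enter at each stage, so no measurability of $n(\cdot)$ is claimed. The statement only asks for existence of $n(w)$ for almost every $w$.

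Suppose no such $n(w)$ exists. Then for every $n\ge1$ there is a continuum $A_n\subset U_w$ with $\operatorname{diam}(A_n)\ge\varepsilon(w)$ and
\[
\operatorname{diam}(f_w^i(A_n))\le \tfrac12 e(\theta^i(w)) \quad \text{for all } |i|\le n.
\]
The space $\mathcal C(U_w)$ of subcontinua of the compact space $U_w$ is compact in the Hausdorff metric. Passing to a subsequence, we get $A_n\to A$ for some continuum $A\subset U_w$. Since the diameter is continuous with respect to the Hausdorff metric, $\operatorname{diam}(A)\ge\varepsilon(w)>0$, so $A$ is nontrivial.

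Now fix $i\in\mathbb Z$. The map $f_w^i:U_w\to U_{\theta^i(w)}$ is a homeomorphism between compact spaces, hence uniformly continuous. It therefore induces a continuous map on hyperspaces, so $f_w^i(A_n)\to f_w^i(A)$. For all $n\ge |i|$ we have $\operatorname{diam}(f_w^i(A_n))\le \tfrac12 e(\theta^i(w))$, and passing to the limit gives
\[
\operatorname{diam}(f_w^i(A))\le \tfrac12 e(\theta^i(w)) < e(\theta^i(w)).
\]
This holds for every $i\in\mathbb Z$ and contradicts the $cw$-expansiveness of $F_{\mathcal U}$ for the nontrivial continuum $A$. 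Hence for this $w$ some $n(w)$ works. Increasing $n$ only enlarges the maximum, so we may take $n(w)\ge1$.

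The main technical point is the limiting step. It needs two facts: that Hausdorff limits of continua are continua, and that $f_w^i$ respects Hausdorff convergence. Both are standard consequences of the compactness of the fibers and the continuity of the fiber maps. The remaining care is to make sure the exceptional null set does not depend on $\varepsilon$ or $A$. It does not: it is just the null set from the definition of random $cw$-expansiveness.
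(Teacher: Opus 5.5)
Your proof is correct and follows essentially the same route as the paper. You negate the conclusion to get continua $A_n$ with $\operatorname{diam}(A_n)\ge\varepsilon(w)$ and small iterates for $|i|\le n$, extract a Hausdorff-convergent subsequence, and pass diameters to the limit, obtaining a nontrivial continuum all of whose iterates have diameter at most $\tfrac12 e(\theta^i(w))$.

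The only difference is at the last step: you contradict the definition of random $cw$-expansiveness directly, whereas the paper goes through $\Gamma_e(w,x)$ and Lemma~\ref{lemma1}. Your version is slightly cleaner, since it does not need to identify the characteristic $e$ with the random variable that Lemma~\ref{lemma1} provides.
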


\begin{proof}
Fix a positive random variable $\varepsilon=\varepsilon(w)$ and let $w$ belong to a full measure subset of $\Omega$. Assume, by contradiction, that the conclusion fails for this $w$. Then, for every integer $n\ge1$, there exists a continuum $A_{n,w}\subset U_w$ such that \[ \operatorname{diam}(A_{n,w})\ge\varepsilon(w) \] and \[ \operatorname{diam}(f_w^i(A_{n,w})) \le \frac12\,e(\theta^i(w)), \qquad |i|\le n. \]

Since each fiber $U_w$ is compact, the hyperspace $\mathcal{K}(U_w)$ of nonempty compact subsets of $U_w$, endowed with the Hausdorff metric, is also compact. By passing to a subsequence if necessary, we can assume that $A_{n,w}$ converges in the Hausdorff topology to a compact set $A_w \subset U_w$ in the Hausdorff topology as $n \to \infty$. Furthermore, since the family of continua is closed under the Hausdorff metric, $A_w$ is a continuum.

By the continuity of the diameter map with respect to the Hausdorff metric, we obtain
\[
\diam(A_w) = \lim_{n \to \infty} \diam(A_{n,w}) \ge \varepsilon(w) > 0,
\]
which implies that $A_w$ is a non-trivial continuum.

Now, fix an arbitrary integer $i \in \mathbb{Z}$. For all sufficiently large $n$ such that $n \ge |i|$, the assumption implies that $\diam(f_w^i(A_{n,w})) \le \frac{e(\theta^i(w))}{2}$. Using the continuity of the map $f_w^i$ and the diameter operator once more, it follows that
\[
\diam\left(f_w^i(A_w)\right) = \lim_{n \to \infty} \diam\left(f_w^i(A_{n,w})\right) \le \frac{e(\theta^i(w))}{2} < e(\theta^i(w)).
\]

Consequently, for any fixed base point $x \in A_w$ and for every $z \in A_w$, we have
\[
d\left(f_w^i(z), f_w^i(x)\right) \le \diam\left(f_w^i(A_w)\right) \le \frac{e(\theta^i(w))}{2} \le e(\theta^i(w)),
\]
for all $i \in \mathbb{Z}$. This yields
\[
A_w \subset \Gamma_e(w,x).
\]

Since $A_w$ is a non-trivial continuum contained in $\Gamma_e(w,x)$, this directly contradicts Lemma~\ref{lemma1}. Therefore, for $\mathbb P$-almost every $w\in\Omega$ there exists an integer $n(w)\ge1$ satisfying the conclusion.
\end{proof}

Motivated by Obaid and Kadhim~\cite{obaid}, we introduce a random version of equicontinuity for random dynamical systems.

\begin{definition}
A random dynamical system $F_{\mathcal U}$ is said to be random equicontinuousif for every positive random variable $\epsilon = \epsilon(w)$, there exists a positive measurable random variable $\delta = \delta(w)$ such that for $\mathbb{P}$-almost every $w\in \Omega$, for every $x,y\in U_w$.
w satisfying $d(x, y) < \delta(w)$, the following holds for all $n \in \mathbb{Z}$:
\[
d\left(f^{n}_{w}(x), f_{w}^{n}(y)\right) < \epsilon(\theta^n(w)),
\]
for $\mathbb{P}$-a.e. $w \in \Omega$.
\end{definition}

We now show that random $cw$-expansiveness and random equicontinuity are mutually exclusive properties on nontrivial spaces.

\begin{proposition}
Let $F_{\mathcal U}$ be a random dynamical system such that, for
$\mathbb P$-almost every $w\in\Omega$, the fiber $U_w$ is an
uncountable locally connected  Lindelöf metric space.
If $F_{\mathcal U}$ is random equicontinuous, then $F_{\mathcal U}$ is not random continuum-wise expansive.
\end{proposition}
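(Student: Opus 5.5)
Argue by contradiction: suppose $F_{\mathcal U}$ is random equicontinuous and also random cw-expansive with characteristic $e=e(w)$. Apply the definition of random equicontinuity with $\epsilon=e$ to obtain a positive measurable random variable $\delta=\delta(w)$. Then there is a full measure set $\Omega_0\subset\Omega$ on which simultaneously the equicontinuity estimate holds, the conclusion of Lemma \ref{lemma1}(ii) holds with characteristic $e$, and the fiber $U_w$ is an uncountable, locally connected, Lindelöf metric space. Fix $w\in\Omega_0$.

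The key step is to produce a nontrivial continuum of small diameter inside $U_w$. If $U_w$ is also compact, local connectedness means that every point $x$ has a connected open neighbourhood $V_x\subset B(x,\delta(w)/2)$. Choosing a smaller closed neighbourhood inside it, using the fact that compact locally connected metric spaces are locally continuum-connected (Peano continua locally), gives a continuum $C_x$ with $x\in C_x\subset B(x,\delta(w)/2)$ and $C_x$ a neighbourhood of $x$. The point of the hypotheses "uncountable" and "Lindelöf" is to rule out that every such $C_x$ is a singleton. If for every $x$ the small continua were trivial, each point would be isolated, since a connected open neighbourhood of a non-isolated point in a metric space has more than one point. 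Then $U_w$ would be discrete. A discrete Lindelöf space is countable, which contradicts uncountability. Hence some point $x$ is non-isolated, and $C_x$ contains more than one point. To pass from a connected open set to a continuum, take two distinct points of the connected neighbourhood $V_x$ and use that, in a locally connected complete (here compact) metric space, connected open sets are arcwise connected. This gives an arc, which is a nontrivial continuum $C\subset B(x,\delta(w)/2)$. I expect this topological step to be the main obstacle. The argument leans on compactness of $U_w$, which is part of the standing setup, and without it one would only have a connected set rather than a continuum. If needed, I would take the closure of a small connected neighbourhood inside a compact ball instead.

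With $C$ in hand, every $y\in C$ satisfies $d(x,y)<\delta(w)$. Random equicontinuity then gives $d(f_w^n(x),f_w^n(y))<e(\theta^n(w))$ for all $n\in\Z$, so $C\subset\Gamma_e(w,x)$. This is a nontrivial continuum inside $\Gamma_e(w,x)$, contradicting Lemma \ref{lemma1}. Equivalently, in the original definition we get $\diam(f_w^n(C))\le e(\theta^n(w))$ for every $n$, contradicting cw-expansivity directly. Since $w\in\Omega_0$ was arbitrary in a set of positive (full) measure, $F_{\mathcal U}$ cannot be random cw-expansive.
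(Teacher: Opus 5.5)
Your proof is correct and follows essentially the paper's route. Equicontinuity at scale $e$ places small balls inside $\Gamma_e(w,x)$, Lemma~\ref{lemma1} rules out nontrivial continua there, and Lindel\"of plus uncountability yield the contradiction; you simply run the argument contrapositively, and you justify the step the paper only asserts (that $B_w[x,\delta(w)]$ is a singleton) via local connectedness and compactness of the fibre, with radius $\delta(w)/2$ giving $\operatorname{diam}(f_w^n(C))\le e(\theta^n(w))$ directly.
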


\begin{proof}
Suppose, by contradiction, that $F_{\mathcal{U}}$ is random $cw$-expansive with expansivity characteristic $e = e(w)$. 

By applying the definition of random equicontinuity to the positive random variable $e(w) > 0$, there exists a positive random variable $\delta = \delta(w)$ such that, for $\mathbb{P}$-a.e. $w \in \Omega$, 
\[
d\left(f^{n}_{w}(x), f_{w}^{n}(y)\right) < e(\theta^n w),
\]
for all $n \in \mathbb{Z}$, whenever $x, y \in U_w$ satisfy $d(x,y) < \delta(w)$. This immediately implies that for every $x \in U_w$ we have
\[
B_w(x, \delta(w)) \subset B_w[x, \delta(w)] \subset \Gamma_{e}(w,x).
\]

By Lemma~\ref{lemma1}, the set $\Gamma_{e}(w,x)$ cannot contain any non-trivial continuum. Consequently, the ball $B_w[x, \delta(w)]$ is a singleton.  Since $U_w$ is a Lindelöf space for $\mathbb{P}$-a.e. $w \in \Omega$, the open cover $\bigcup_{x\in U_w} B_w(x, \delta(w))$ admits a countable subcover, which implies that
\[
U_w = \bigcup_{i \in \mathbb{N}} B_w(x_i, \delta(w)).
\]
Hence, the fiber $U_w$ must be at most countable. This directly contradicts the hypothesis that $U_w$ is an uncountable space for $\mathbb{P}$-a.e. $w \in \Omega$. Therefore, $F_{\mathcal{U}}$ cannot be randomly $cw$-expansive.
\end{proof}

The following definition of sensitivity is a natural adaptation of the concept found in Naif and Kadhim~\cite{naif} to our random dynamical systems framework.

\begin{definition}
A random dynamical system $F_{\mathcal{U}}$ is said to be \emph{random sensitive}  if there exists a positive mensurable random variable $\delta(\omega)$ such that for $\mathbb{P}$-a.e. $\omega \in \Omega$, for any $x \in U_\omega$ and any neighborhood $U \subset U_\omega$ of $x$, there exists $y \in U$ and some $n \in \mathbb{Z}$ satisfying
\[
d\left(f^{n}_{\omega}(x), f_{\omega}^{n}(y)\right) > \delta(\theta^{n} \omega). 
\]
\end{definition}

The relationship between random $cw$-expansiveness and sensitivity is established in the following result:

\begin{proposition}\label{cwexpansivesensitive}
Let $F_{\mathcal U}$ be a random continuum-wise expansive system. Suppose that, for $\mathbb P$-almost every $w\in\Omega$, the fiber $U_w$ is a locally connected compact metric space without isolated points. Then $F_{\mathcal U}$ is random sensitive.
\end{proposition}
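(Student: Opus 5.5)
I will show that the sensitivity constant can be taken to be $\delta(w)=\frac12 e(w)$, where $e$ is a $cw$-expansivity characteristic of $F_{\mathcal U}$ supplied by the definition (equivalently, by Lemma~\ref{lemma1}). The argument is by contradiction. Fix $w$ in the full-measure set where both the $cw$-expansivity property and the standing hypotheses on $U_w$ hold. Suppose there are a point $x\in U_w$ and a neighborhood $U\subset U_w$ of $x$ such that
\[
d\bigl(f_w^n(x),f_w^n(y)\bigr)\le \tfrac12 e(\theta^n(w)) \qquad\text{for all } y\in U \text{ and all } n\in\mathbb Z .
\]
The goal is to produce a nontrivial continuum inside $U$. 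Such a continuum would then lie in $\Gamma_{e}(w,x)$, contradicting Lemma~\ref{lemma1}.

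\textbf{Step 1: a connected neighborhood.} Since $U_w$ is locally connected, there is a connected open set $W$ with $x\in W\subset U$.

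\textbf{Step 2: $W$ is not a singleton.} Because $U_w$ has no isolated points, $\{x\}$ is not open. Hence $W\neq\{x\}$, and we may pick $z\in W$ with $z\neq x$.

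\textbf{Step 3: a nontrivial continuum in $W$.} The set $W$ itself need not be compact, so we use a standard fact about compact metric spaces: the closure of a connected open set is a continuum. To stay inside $U$, first choose $r>0$ with the closed ball of radius $r$ about $x$ contained in $U$ and $r<d(x,z)$. Let $W'$ be the component of $x$ in the open ball of radius $r$ about $x$. By local connectedness, $W'$ is open.

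\begin{itemize}
\item[(a)] $W'$ is not $\{x\}$, since $x$ is not isolated.
\item[(b)] The closure $\overline{W'}$ is compact and connected, so it is a continuum.
\item[(c)] $\overline{W'}\subset U$, because $\overline{W'}$ lies in the closed ball of radius $r$ about $x$.
\item[(d)] $\overline{W'}$ contains $x$ and at least one other point, so it is nontrivial.
\end{itemize}

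With this construction the auxiliary point $z$ from Step 2 is not strictly needed. Call this continuum $C$.

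\textbf{Step 4: the contradiction.} For every $y\in C$ and every $n\in\mathbb Z$ the standing assumption gives
\[
d\bigl(f_w^n(x),f_w^n(y)\bigr)\le \tfrac12 e(\theta^n(w))\le e(\theta^n(w)).
\]
Therefore $C\subset\Gamma_e(w,x)$. This contradicts Lemma~\ref{lemma1}(ii). Alternatively, by the triangle inequality $\operatorname{diam} f_w^n(C)\le e(\theta^n(w))$ for all $n$, which contradicts the definition of random $cw$-expansivity directly.

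Note that the negation of sensitivity with constant $\delta=\frac12 e$ is exactly the standing assumption: failure of the strict inequality $>\delta(\theta^n w)$ for all $y\in U$ and all $n$ means $\le$. Measurability of $\delta=\frac12 e$ is inherited from $e$.

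The main obstacle is Step 3: turning local connectedness and the absence of isolated points into a nontrivial continuum inside an arbitrary neighborhood. The key facts are that components of open sets in a locally connected space are open, and that a nonempty open set in a space without isolated points is infinite. Every other step is direct bookkeeping with the definitions.
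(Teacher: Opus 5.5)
Your proof is correct and is essentially the paper's argument in contrapositive form: the paper takes a nondegenerate continuum $\mathcal C\subset U$ through $x$, picks $a,b\in\mathcal C$ with $d(f_w^n(a),f_w^n(b))>e(\theta^n(w))$, and uses the triangle inequality to get one of them more than $\frac12 e(\theta^n(w))$ away from $f_w^n(x)$. That is your Step 4, and your direct diameter ending is the one to keep, since Lemma~\ref{lemma1} as stated does not guarantee that the same $e$ works in (ii). Your Step 3, which takes the closure of the component of $x$ in a small ball, also supplies the justification the paper omits when it simply asserts that the component of $U$ containing $x$ contains a nondegenerate subcontinuum.
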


\begin{proof}
Let $e = e(w)$ be the expansivity characteristic of the random dynamical system. Fix a positive measurable random variable
$0<\delta(w)<\frac12e(w)$ for $\mathbb P$-almost every $w\in\Omega$.

Consider an arbitrary point $x \in U_w$ and any open neighborhood $U \subset U_w$ of $x$. Since $U_w$ is locally connected, the connected component of $U$ containing $x$, denoted by $V_x$, is an open and connected neighborhood of $x$. Furthermore, since $U_w$ has no isolated points, $V_x$ is a non-trivial connected open set, which implies it contains a non-degenerate subcontinuum $\mathcal{C} \subset V_x \subset U$ such that $x \in \mathcal{C}$.

By random continuum-wise expansiveness, there exists
$n\in\mathbb Z$ such that $\operatorname{diam}(f_w^n(\mathcal C))
>
e(\theta^n(w))$. 

Hence, there exists $a,b\in\mathcal C$ satisfying
$d(f_w^n(a),f_w^n(b)) > e(\theta^n(w))$. By the triangle inequality,
\[
d(f_w^n(a),f_w^n(x))
+
d(f_w^n(x),f_w^n(b))
>
e(\theta^n(w)),
\]
and therefore at least one of the points
$y\in\{a,b\}$
satisfies
\[
d(f_w^n(x),f_w^n(y))
>
\frac12e(\theta^n(w))
>
\delta(\theta^n(w)).
\]
\end{proof}

The following example, originally introduced in \cite{BilbaoOliveiraSantana2024}, illustrates a concrete application where random $cw$-expansiveness yields random sensitivity.

\begin{example}
Let $X$ be a locally connected compact metric space with no isolated points. Let $f_0, f_1 : X \to X$ be homeomorphisms such that $f_0$ is an isometry and $f_1$ is a deterministic continuum-wise expansive homeomorphism with expansivity constant $\delta_0 > 0$ that satisfies the shadowing property (The existence of such examples is well known; see, for instance, \cite{AAV}). 

Consider the full shift space $\Omega = \{0,1\}^{\mathbb{Z}}$ endowed with the standard product topology, and let $\sigma: \Omega \to \Omega$ denote the shift map. We define the skew-product map $F: \Omega \times X \to \Omega \times X$ by
\[
F(\omega, x) = (\sigma(\omega), f_{\omega_0}(x)).
\]
The $n$-th iterate of this map for $n > 0$ is explicitly given by:
\[
F^{n}(\omega, x) = \left(\sigma^n(\omega), f_{\omega_{n-1}} \circ \dots \circ f_{\omega_0}(x)\right).
\]
Thus, $F$ defines a random dynamical system over the base $(\Omega, \mathcal{F}, \mathbb{P})$. Let $\mathbb{P}$ be an ergodic shift-invariant probability measure on $\Omega$ such that 
\[
\mathbb{P}\left(\{\omega \in \Omega : \text{there is only a finite number of symbols } \omega_i = 1\}\right) = 0.
\] 

It was shown in \cite{BilbaoOliveiraSantana2024} that this system $F$ is randomly $cw$-expansive. Since $X$ is locally connected and has no isolated points, Proposition~\ref{cwexpansivesensitive} implies that $F$ is random sensitive.
\end{example}

\section{Shadowing for Random Dynamical Systems}

In this section, we introduce the notion of periodic shadowing for random dynamical systems, which generalizes the classical shadowing property to trajectories that return to their initial fibers. This extension is essential for characterizing the stability of periodic orbits in the random context.

Following \cite{GundlachKifer}, we recall the notions of pseudo-orbit and shadowing for random dynamical systems.

\begin{definition}
Let $F_{\mathcal U}$ be a random dynamical system over
$(\Omega,\theta,\mathbb P)$ and let
$\varepsilon=\varepsilon(w)$ and
$\delta=\delta(w)$ be positive random variables.

\begin{itemize}

\item A sequence
$\{x_k\}_{k\in\mathbb Z}$ with
$x_k\in U_{\theta^k(w)}$
is called a $(w,\delta)$-pseudo-orbit if
\[
d\bigl(f_{\theta^k(w)}(x_k),x_{k+1}\bigr)
\le
\delta(\theta^{k+1}(w))
\]
for every $k\in\mathbb Z$.

\item A finite sequence
$\{x_k\}_{k=0}^{n}$ with
$x_k\in U_{\theta^k(w)}$
is called a finite $(w,\delta)$-pseudo-orbit if
\[
d\bigl(f_{\theta^k(w)}(x_k),x_{k+1}\bigr)
\le
\delta(\theta^{k+1}(w))
\]
for every $0\le k\le n-1$.

\item A point $z\in U_w$ is said to $(w,\varepsilon)$-shadow a $(w,\delta)$-pseudo-orbit $\{x_k\}_{k\in\mathbb Z}$ if
\[
d(f_w^k(z),x_k)
\le
\varepsilon(\theta^k(w))
\]
for every $k\in\mathbb Z$.

\item A point $z\in U_w$ is said to finite $(w,\varepsilon)$-shadow a finite $(w,\delta)$-pseudo-orbit $\{x_k\}_{k=0}^{n}$ if
\[
d(f_w^k(z),x_k)
\le
\varepsilon(\theta^k(w))
\]
for every $0\le k\le n$.

\item $F_{\mathcal U}$ has the shadowing property if, for every positive random variable $\varepsilon$, there exists a positive random variable $\delta$ such that every $(w,\delta)$-pseudo-orbit is $(w,\varepsilon)$-shadowed by the orbit of some point $z\in U_w$.

\item $F_{\mathcal U}$ has the finite shadowing property if, for every positive random variable $\varepsilon$, there exists a positive random variable $\delta$ such that every finite $(w,\delta)$-pseudo-orbit is finite $(w,\varepsilon)$-shadowed by the orbit of some point $z\in U_w$.

\end{itemize}
\end{definition}

The next lemma about uniqueness of shadows is a generalization of the result in \cite{Lee2016}.

\begin{lemma}\label{lemma3}
Let $F_{\U}$ be a random dynamical system $cw$-expansive with characteristic of expansivity $e(w)>0$ and that has random shadowing property. Given a random variable $0<\epsilon(w)< \frac{e(w)}{2}$ and $\delta(w)>0$ corresponds to $\epsilon(w)$ as in the shadowing property definition, then there exists a unique point $z\in U_w$ which $(w,\epsilon)$-shadows a given $(w,\delta)$-pseudo orbit. 
\end{lemma}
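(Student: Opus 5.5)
The plan is to argue by contradiction: suppose two distinct points $z_1,z_2\in U_w$ both $(w,\epsilon)$-shadow the same $(w,\delta)$-pseudo-orbit $\{x_k\}_{k\in\mathbb Z}$, and show that this forces $\Gamma_e(w,z_1)$ to contain a nontrivial continuum, contradicting Lemma \ref{lemma1}.

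\textbf{Step 1: two shadows stay within the expansivity scale.} For every $k\in\mathbb Z$ the triangle inequality gives
\[
d\bigl(f_w^k(z_1),f_w^k(z_2)\bigr)\le d\bigl(f_w^k(z_1),x_k\bigr)+d\bigl(x_k,f_w^k(z_2)\bigr)\le 2\epsilon(\theta^k(w)).
\]
The hypothesis $\epsilon(w)<\frac{e(w)}{2}$ is imposed for a.e.\ $w$, so it applies at every point $\theta^k(w)$ of the orbit. Hence the right-hand side is strictly less than $e(\theta^k(w))$. It follows that $z_2\in\Gamma_e(w,z_1)$. More generally, let $S_w$ be the set of all $(w,\epsilon)$-shadows of $\{x_k\}$. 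Then $S_w\subset\Gamma_e(w,z)$ for every $z\in S_w$. Moreover, $S_w$ is closed, and hence compact, because each condition $d(f_w^k(z),x_k)\le\epsilon(\theta^k(w))$ is closed by continuity of $f_w^k$.

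\textbf{Step 2: produce a nontrivial continuum of points whose orbits stay $e$-close to that of $z_1$.} Lemma \ref{lemma1} forbids nontrivial continua in $\Gamma_e(w,z_1)$, not pairs of points. So the heart of the proof is to find a nontrivial continuum $C\subset U_w$ containing $z_1$ and $z_2$ such that $\operatorname{diam}(f_w^k(C))\le e(\theta^k(w))$ for all $k\in\mathbb Z$. Such a $C$ violates the definition of random $cw$-expansivity directly.

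To build $C$, I would use the slack in the strict inequality $2\epsilon<e$ together with the shadowing property once more. For each candidate intermediate point, I would form the pseudo-orbit obtained by splicing the orbits of $z_1$ and $z_2$ at a switching time $N$. This is still a $(w,\delta')$-pseudo-orbit for a suitable $\delta'$, because the orbits are within $2\epsilon$ of each other. Its shadows, for switching times $N$ ranging over $\mathbb Z$, should fill out a family of points all lying in $\Gamma_e(w,z_1)$. I would then take a Hausdorff limit of these finite families, exactly as in the proof of Lemma \ref{lemma2}, to obtain a compact set joining $z_1$ to $z_2$ whose iterates all have diameter at most $e(\theta^k(w))$.

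\textbf{Expected obstacle.} The main difficulty is showing that this limit set is connected, that is, that the chain of shadows obtained by varying $N$ has mesh tending to zero. I would try to obtain this from Lemma \ref{lemma2} applied with a small $\varepsilon(w)$. If the mesh does not go to zero, this is the point at which the argument must be reexamined.

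Once $C$ is obtained, the contradiction with Lemma \ref{lemma1} gives $z_1=z_2$. Existence of a shadowing point is immediate from the shadowing property, so together these give the claimed unique point $z\in U_w$.
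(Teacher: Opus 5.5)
There is a genuine gap, and it is exactly where you located it: Step~2 cannot be completed, because the statement is false under mere $cw$-expansivity. Take $\Omega$ to be a single point, so every random variable is a constant and the system is deterministic. Let $U_w=K$ be the middle-thirds Cantor set and $f_w=\mathrm{id}$.
\begin{itemize}
\item Since $K$ contains no nontrivial continuum, this system is random $cw$-expansive with any characteristic $e>0$.
\item It has the shadowing property. Given $\epsilon>0$, partition $K$ into finitely many clopen sets of diameter $<\epsilon$, and choose $\delta$ smaller than the distance between any two of them. A $\delta$-pseudo-orbit of the identity then stays in a single piece, and every point of that piece $\epsilon$-shadows it.
\item Uniqueness fails. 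Fix any $0<\epsilon<e/2$ and any $\delta>0$. The constant sequence $x_k=x$ is a genuine orbit, hence a $(w,\delta)$-pseudo-orbit. It is $(w,\epsilon)$-shadowed by every $z\in K$ with $d(z,x)\le\epsilon$, and there are infinitely many such $z$ because $K$ has no isolated points.
\end{itemize}
In this example no nontrivial continuum joins two distinct shadows at all, so your chain of spliced shadows can never have mesh tending to zero. The splicing step is also problematic on its own. The jump at the switching time can be close to $2\epsilon$, far larger than the $\delta$ for which shadowing gives $\epsilon$-control. The shadowing property gives no guarantee that pseudo-orbits with jumps of size $2\epsilon$ are shadowed within $e/2$.

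Conceptually, applying the statement to true orbits $x_k=f_w^k(x)$ shows that it forces $\Gamma_\epsilon(w,x)=\{x\}$ for all $x$, i.e.\ random expansivity with characteristic $\epsilon$. Your Step~1 conclusion $z_2\in\Gamma_e(w,z_1)$ is precisely what expansivity excludes and $cw$-expansivity does not, since Lemma~\ref{lemma1} only forbids nontrivial continua in $\Gamma_e(w,x)$, not pairs of points.

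The paper's own proof makes the same unjustified leap. It takes ``any non-trivial continuum $A$ connecting $z$ and $y$'' and asserts $\diam(f_w^i(A))\le 2\epsilon(\theta^i(w))$. But a bound on the orbits of the two endpoints says nothing about the other points of $A$, and such an $A$ need not exist.

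What your Step~1 does correctly establish is the $cw$-analogue of uniqueness. The set $S_w$ of $(w,\epsilon)$-shadows is nonempty and compact, with $\diam(f_w^k(S_w))\le 2\epsilon(\theta^k(w))<e(\theta^k(w))$ for all $k\in\mathbb Z$. Hence $S_w$ contains no nontrivial continuum, i.e.\ it is totally disconnected. Genuine uniqueness requires strengthening the hypothesis to random expansivity, in which case Step~1 alone finishes the proof.
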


\begin{proof}
Let $\{x_n\}_{n \in \mathbb{Z}}$ be a given $(w, \delta)$-pseudo-orbit of $F_{\mathcal{U}}$. Suppose there exists two distinct points $z, y \in U_w$ that $(w, \epsilon)$-shadow the sequence $\{x_n\}_{n \in \mathbb{Z}}$. By the triangle inequality and the definition of shadowing, we have:
\begin{align}
d\left(f_{w}^{i}(z), f_{w}^{i}(y)\right) &\leq d\left(f_{w}^{i}(z), x_i\right) + d\left(x_i, f_{w}^{i}(y)\right) \nonumber \\
& \leq \epsilon(w^i(w)) + \epsilon(\theta^i(w)) \nonumber \\
& = 2\epsilon(\theta^i(w)) < e(\theta^i(w)),
\end{align}
for every $i \in \mathbb{Z}$. 

This uniformly bounded distance implies that if we consider any non-trivial continuum $A \subset U_w$ connecting $z$ and $y$, its image under any iteration would satisfy $\diam(f_w^i(A)) \le 2\epsilon(\theta^i(w)) < e(\theta^i(w))$ for all $i \in \mathbb{Z}$. This implies that $A \subset \Gamma_{e}(w, z)$, which directly contradicts Lemma~\ref{lemma1} since $\Gamma_{e}(w, z)$ cannot contain non-trivial continua. 
\end{proof}

The following result establishes the topological invariance of the random shadowing property, showing that it is preserved under topological conjugacy.

\begin{proposition}\label{proposition3}
Let $F_{\mathcal{U}}$ and $G_{\mathcal{V}}$ be two topologically conjugated random dynamical systems over $(\Omega, \mathcal{F}, \mathbb{P})$. If $F_{\mathcal{U}}$ has the random shadowing property, then $G_{\mathcal{V}}$ also satisfies the random shadowing property.
\end{proposition}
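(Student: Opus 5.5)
The plan is to transport pseudo-orbits of $G_{\mathcal V}$ to pseudo-orbits of $F_{\mathcal U}$ through $\pi^{-1}$, shadow them there, and then push the shadowing point back with $\pi$. Let $\{\pi_w:U_w\to V_w\}$ be the conjugacy, so that $\pi_{\theta(w)}\circ f_w=g_w\circ\pi_w$. Iterating gives $\pi_{\theta^k(w)}\circ f_w^k=g_w^k\circ\pi_w$ for all $k\in\mathbb Z$, including negative $k$ since each $\pi_w$ is bijective. Fix a positive random variable $\varepsilon(w)$, the target shadowing accuracy for $G$.

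First I use uniform continuity on the compact fibers, exactly as in the proof of the cw-expansivity invariance proposition. There is a positive measurable $\varepsilon'(w)$ such that $d(a,b)\le\varepsilon'(w)$ in $U_w$ implies $d(\pi_w(a),\pi_w(b))\le\varepsilon(w)$. Apply the shadowing property of $F_{\mathcal U}$ to $\varepsilon'$; this yields $\delta'(w)$. Using uniform continuity of $\pi_w^{-1}$ on $V_w$, choose a positive measurable $\delta(w)$ such that $d(p,q)\le\delta(w)$ in $V_w$ implies $d(\pi_w^{-1}(p),\pi_w^{-1}(q))\le\delta'(w)$.

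Next, take a $(w,\delta)$-pseudo-orbit $\{y_k\}$ of $G$, with $y_k\in V_{\theta^k(w)}$, and set $x_k=\pi^{-1}_{\theta^k(w)}(y_k)$. Then
\[
f_{\theta^k(w)}(x_k)=\pi^{-1}_{\theta^{k+1}(w)}\bigl(g_{\theta^k(w)}(y_k)\bigr),
\]
and $d(g_{\theta^k(w)}(y_k),y_{k+1})\le\delta(\theta^{k+1}(w))$. Applying the choice of $\delta$ in the fiber $\theta^{k+1}(w)$ gives $d(f_{\theta^k(w)}(x_k),x_{k+1})\le\delta'(\theta^{k+1}(w))$, so $\{x_k\}$ is a $(w,\delta')$-pseudo-orbit of $F$. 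Hence some $z\in U_w$ satisfies $d(f_w^k(z),x_k)\le\varepsilon'(\theta^k(w))$ for all $k$.

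Finally, put $u=\pi_w(z)$. Then $g_w^k(u)=\pi_{\theta^k(w)}(f_w^k(z))$ and $y_k=\pi_{\theta^k(w)}(x_k)$. The choice of $\varepsilon'$ in the fiber $\theta^k(w)$ gives $d(g_w^k(u),y_k)\le\varepsilon(\theta^k(w))$, which is the required $(w,\varepsilon)$-shadowing.

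The only genuinely delicate step is the measurability of $\varepsilon'$ and $\delta$. The fiberwise moduli of continuity exist by compactness, but making them measurable in $w$ needs the measurability of the bundle map and of its inverse. I would handle this as the earlier proposition does: take the largest admissible modulus, or a rational-valued version of it, and express it through countably many conditions on the jointly measurable map $(w,x)\mapsto\pi_w(x)$. Everything else is a direct verification.
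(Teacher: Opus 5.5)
Your proposal is correct and follows the same route as the paper. You pull the $G$-pseudo-orbit back through $\pi^{-1}_{\theta^k(w)}$ using a measurable uniform-continuity modulus, shadow it in $F_{\mathcal U}$ at the accuracy $\varepsilon'$ chosen from the continuity of $\pi$, and push the shadowing point forward by $\pi_w$. Your use of non-strict inequalities in the moduli is slightly cleaner than the paper's strict ones, since the pseudo-orbit condition is stated with $\le$.
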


\begin{proof}
Let $\epsilon:\Omega\to(0,\infty)$ be a positive random variable, and let
$\{\pi_w:U_w\to V_w\}_{w\in\Omega}$ be the bundle homeomorphism defining the conjugacy between $F_{\mathcal U}$ and $G_{\mathcal V}$.

Since each fiber is compact, every map $\pi_w$ is uniformly continuous. Hence, for every $w\in\Omega$ there exists $\epsilon_0(w)>0$ such that if $d(x,y)<\epsilon_0(w)$ then $d(\pi_w(x),\pi_w(y))<\epsilon(w)$, for all $x,y\in U_w$. Moreover, since $w\mapsto\pi_w$ is a measurable bundle homeomorphism and the fibers are compact, we may choose $\epsilon_0:\Omega\to(0,\infty)$ measurable.

Since $F_{\mathcal U}$ has the random shadowing property, there exists a positive measurable random variable $\delta_0:\Omega\to(0,\infty)$ such that every $(w,\delta_0)$-pseudo-orbit of $F_{\mathcal U}$ is $(w,\epsilon_0)$-shadowed.

Similarly, since each $\pi_w^{-1}$ is uniformly continuous and
$w\mapsto\pi_w^{-1}$ is measurable, there exists a measurable positive random variable $\delta:\Omega\to(0,\infty)$ such that $d(\pi_w^{-1}(x'),\pi_w^{-1}(y'))<\delta_0(w)$  whenever $d(x',y')<\delta(w)$ for all $x',y'\in V_w$.

Now let $\{y_n\}_{n\in\mathbb Z}$ be a $(w,\delta)$-pseudo-orbit of
$G_{\mathcal V}$. Then for every $n\in\mathbb Z$
\[
d\bigl(g_{\theta^n(w)}(y_n),y_{n+1}\bigr)
\le
\delta(\theta^{n+1}(w)),
\]
for $n\in\mathbb Z$. 

By the choice of $\delta$ and the conjugacy relation
\[
\pi^{-1}_{\theta^{n+1}(w)}
\circ
g_{\theta^n(w)}
=
f_{\theta^n(w)}
\circ
\pi^{-1}_{\theta^n(w)},
\]
we obtain
\begin{equation*}
\begin{aligned}
&
d\!\left(
f_{\theta^n(w)}
(\pi^{-1}_{\theta^n(w)}(y_n)),
\pi^{-1}_{\theta^{n+1}(w)}(y_{n+1})
\right)
\\
&\qquad\le
\delta_0(\theta^{n+1}(w)).
\end{aligned}
\end{equation*}
Hence, $\{\pi^{-1}_{\theta^n(w)}(y_n)\}_{n\in\mathbb Z}$ is a $(w,\delta_0)$-pseudo-orbit of $F_{\mathcal U}$.

By the shadowing property, there exists $x\in U_w$ such that
\[
d\!\left(
f_w^n(x),
\pi^{-1}_{\theta^n(w)}(y_n)
\right)
\le
\epsilon_0(\theta^n(w)),
\qquad n\in\mathbb Z.
\]
Therefore
\[
\begin{aligned}
d\bigl(g_w^n(\pi_w(x)),y_n\bigr)
&=
d\!\left(
\pi_{\theta^n(w)}(f_w^n(x)),
\pi_{\theta^n(w)}
(\pi^{-1}_{\theta^n(w)}(y_n))
\right)  \\
&\le
\epsilon(\theta^n(w)),
\end{aligned}
\]
for every $n\in\mathbb Z$. 

Thus, the point
$z=\pi_w(x)\in V_w$ $(w,\epsilon)$-shadows the pseudo-orbit
$\{y_n\}_{n\in\mathbb Z}$.

Hence, $G_{\mathcal V}$ has the random shadowing property.
\end{proof}

The following result generalizes the classical result by Pilyugin \cite{pilyugin} for the random case, establishing that the finite shadowing property is sufficient to guarantee the shadowing property.

\begin{proposition}\label{finiteSP}
Let $F_{\mathcal U}$ be a random dynamical system whose fibers are compact metric spaces. If $\mathcal{F}_{\U}$ has finite shadowing property, then it also possesses the shadowing property.
\end{proposition}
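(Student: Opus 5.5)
The plan follows Pilyugin's compactness argument fiberwise: build shadowing points for longer and longer finite segments of the pseudo-orbit, then pass to a limit in the compact fiber.

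Fix a positive random variable $\varepsilon=\varepsilon(w)$. Apply the finite shadowing property to $\varepsilon$ itself to obtain a positive random variable $\delta=\delta(w)$ such that every finite $(w,\delta)$-pseudo-orbit is finite $(w,\varepsilon)$-shadowed. We claim this same $\delta$ works for infinite pseudo-orbits. The inequalities in the definition are non-strict, so no shrinking of $\varepsilon$ is needed; if one prefers, one may apply finite shadowing to $\varepsilon/2$ and absorb the limit error.

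Let $\{x_k\}_{k\in\mathbb Z}$ be a $(w,\delta)$-pseudo-orbit with $x_k\in U_{\theta^k(w)}$. For each $n\ge1$ consider the finite segment $\{x_k\}_{k=-n}^{n}$. Reindexed, this is a finite $(\theta^{-n}(w),\delta)$-pseudo-orbit: set $y_j=x_{j-n}\in U_{\theta^{j}(\theta^{-n}w)}$, $0\le j\le 2n$, and the defining inequality $d(f_{\theta^{j}(\theta^{-n}w)}(y_j),y_{j+1})\le \delta(\theta^{j+1}(\theta^{-n}w))$ is exactly the original one. Finite shadowing, applied at the base point $\theta^{-n}(w)$, gives $p_n\in U_{\theta^{-n}(w)}$ with
\[
d\bigl(f^{j}_{\theta^{-n}(w)}(p_n),x_{j-n}\bigr)\le \varepsilon(\theta^{j-n}(w)),\qquad 0\le j\le 2n .
\]
Put $z_n=f^{n}_{\theta^{-n}(w)}(p_n)\in U_w$. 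By the cocycle identities $f^{k}_{w}\circ f^{n}_{\theta^{-n}(w)}=f^{n+k}_{\theta^{-n}(w)}$, valid for $|k|\le n$ (this follows from the definitions of $f^{\pm n}_w$), we get $d(f^k_w(z_n),x_k)\le\varepsilon(\theta^k(w))$ for all $|k|\le n$.

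Since $U_w$ is compact, a subsequence $z_{n_i}\to z\in U_w$. Fix $k\in\mathbb Z$. Each $f^k_w:U_w\to U_{\theta^k(w)}$ is a composition of finitely many homeomorphisms or their inverses, hence continuous. Therefore $f^k_w(z_{n_i})\to f^k_w(z)$, and for $n_i\ge|k|$ the closed inequality passes to the limit, giving $d(f^k_w(z),x_k)\le\varepsilon(\theta^k(w))$. Thus $z$ $(w,\varepsilon)$-shadows the pseudo-orbit.

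The main obstacle is the reindexing step. The finite shadowing property as defined only treats segments starting at index $0$ over the base point $w$. Our argument requires it to hold at the shifted base points $\theta^{-n}(w)$, with the same random variable $\delta$ evaluated along the shifted orbit, and requires the two-sided cocycle relations to hold for negative times. If finite shadowing holds only for $\mathbb P$-a.e. $w$, we use the $\theta$-invariance of the full-measure set: intersect its countably many images $\theta^{n}$ of the good set, which is again of full measure since $\theta$ preserves $\mathbb P$. With that in place, the rest is the standard diagonal and compactness argument.
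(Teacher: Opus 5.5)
Your proposal is correct and follows the paper's proof essentially verbatim. Both apply finite shadowing to the segments $\{x_k\}_{k=-N}^{N}$ over the base point $\theta^{-N}(w)$, push the shadowing point forward by $f^{N}_{\theta^{-N}(w)}$ into $U_w$, extract a convergent subsequence in the compact fiber, and pass the non-strict inequalities to the limit using continuity of $f^{k}_{w}$. Your explicit reindexing, your check of the cocycle identity for negative times, and your $\theta$-invariant full-measure set make precise steps that the paper leaves implicit.
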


\begin{proof}
Let $\epsilon = \epsilon(w) > 0$ be a positive random variable. By the finite shadowing property, there exists a there exists a positive measurable random variable $\delta = \delta(w)$ such that any finite $(w, \delta)$-pseudo-orbit is finite $(w, \epsilon)$-shadowed.

Consider a $(w, \delta)$-pseudo-orbit $\{x_k\}_{k \in \Z}$. For each $N \in \mathbb{N}$, consider $\{x_k\}_{k=-N}^{N}$ a $(w, \delta)$-pseudo-orbit of length $2N+1$. By the finite shadowing property, there exists a point $z_N \in U_{\theta^{-N}(w)}$ that finitely $(w,\epsilon)$-shadows this segment, meaning that
\[ d(f_{\theta^{-N}(w)}^j(z_N), x_{-N+j}) \leq \epsilon(\theta^{-N+j}(w)), \quad \text{for } 0 \leq j \leq 2N. \]

For the point $y_N = f_{\theta^{-N}(w)}^N(z_N)$ in $U_w$ we have
\[ d(f_w^k(y_N), x_k) \leq \epsilon(\theta^k(w)), \quad \text{for} -N \leq k \leq N. \]

By compactness of $U_w$ the sequence $\{y_N\}_{N \in \mathbb{N}} \subset U_w$ has a convergent subsequence $\{y_{N_j}\}_{j \in \mathbb{N}}$, such that $\lim_{j \to \infty} y_{N_j} = z \in U_w$.

Fix $k \in \Z$. For any $N_j > |k|$, it holds that 
\[ d(f_w^k(y_{N_j}), x_k) \leq \epsilon(\theta^k(w)). \]

Since $f_w^k: U_w \to U_{\theta^k(w)}$ is continuous, taking the limit 
\[d(f_w^k(z), x_k) \leq \epsilon(\theta^k(w)). \]

Thus the infinite pseudo-orbit $\{x_k\}_{k \in \Z}$ is $(w, \epsilon)$-shadowed by the orbit of $z \in U_w$.
\end{proof}

In the random context, periodicity imposes return on the fibers. A point $x \in U_w$ is called a return point if $f_w^n(x) = x$ for some $n \in \mathbb{N}$. If, in addition, the base map satisfies $\theta^n (w) = w$, then the pair $(w, x)$ is a periodic point for the random dynamical system $F_{\mathcal{U}}$.

The periodic shadowing property was originally introduced by Kościelniak \cite{Koscielniak1996} in the deterministic setting. We generalize this notion to  random dynamical systems. 

\begin{definition}
We say that $F_{\mathcal{U}}$ has the periodic shadowing property if for every random variable $\epsilon(w) > 0$, there exists a random variable $\delta(w) > 0$ such that, whenever $w \in \Omega$ satisfies $\theta^p(w) = w$  for some minimal period $p \in \mathbb{N}$, then every periodic $(w, \delta)$-pseudo-orbit $\{x_k\}_{k \in \mathbb{Z}}$ ($x_{k+p} = x_k$) is $(w, \epsilon)$-shadowed by a periodic orbit, that is, there exists a point $z \in U_w$ satisfying
\begin{enumerate}
    \item $d(f_w^k(z), x_k) < \epsilon(\theta^k(w))$ for all $k \in \mathbb{Z}$;
    \item $f_w^n(z) = z$.
\end{enumerate}
\end{definition}

The requirement that $\theta^p(w) = w$ is intrinsic to the notion of periodicity in random dynamical systems. Indeed, a periodic orbit can only exist over a periodic point of the driving system, since after $p$ iterates the orbit must return to the same fiber. In particular, when the base transformation is the bilateral shift, periodic points are dense, so periodic fibers naturally arise in many standard models of random dynamical systems.

\begin{remark}
For the case of periodic shadowing, the results of the Lemma\ref{lemma3} and Proposition\ref{proposition3} also apply, that is, the uniqueness of shadowing points and invariance under conjugation.
\end{remark}

\begin{theorem}\label{TeoPeSP}
Let $F_{\mathcal U}$ be a random $cw$-expansive dynamical system with the shadowing property. Then $F_{\mathcal U}$ has the periodic shadowing property.
\end{theorem}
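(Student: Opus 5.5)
Given a random variable $\epsilon(w)>0$, we will shrink it to a random variable $\epsilon'(w)$ with $0<\epsilon'(w)<\min\{\epsilon(w),\tfrac12 e(w)\}$, where $e$ is a $cw$-expansivity characteristic. Let $\delta(w)$ be the random variable that the shadowing property associates with $\epsilon'$; this $\delta$ will serve for the periodic shadowing property. Now fix $w$ with $\theta^p(w)=w$, $p$ minimal, and a periodic $(w,\delta)$-pseudo-orbit $\{x_k\}_{k\in\mathbb Z}$ with $x_{k+p}=x_k$. The shadowing property gives a point $z\in U_w$ with
\[
d(f_w^k(z),x_k)\le \epsilon'(\theta^k(w))<\epsilon(\theta^k(w))\quad\text{for all } k\in\mathbb Z ,
\]
so condition 1 holds. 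It remains to prove $f_w^p(z)=z$, and we will obtain this from the uniqueness of shadows in Lemma~\ref{lemma3}.

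Consider $z'=f_w^p(z)\in U_{\theta^p(w)}=U_w$. Since $\theta^p(w)=w$, we have $f_w^{k}(z')=f_w^{k+p}(z)$ and $\theta^{k+p}(w)=\theta^k(w)$. Hence
\[
d(f_w^k(z'),x_k)=d(f_w^{k+p}(z),x_{k+p})\le\epsilon'(\theta^{k+p}(w))=\epsilon'(\theta^k(w)).
\]
So $z'$ also $(w,\epsilon')$-shadows the same pseudo-orbit. Because $\epsilon'<e/2$ and $\delta$ corresponds to $\epsilon'$, Lemma~\ref{lemma3} gives $z'=z$, that is, $f_w^p(z)=z$, which is condition 2 with $n=p$. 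Here we need the index shift to be compatible with the cocycle identity $f_w^{k+p}=f_{\theta^p(w)}^k\circ f_w^p$; this is routine for $k\ge0$ and follows for negative $k$ from the definition of $f_w^{-n}$.

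The main obstacle is Lemma~\ref{lemma3} itself. Its proof connects $z$ and $y$ by a continuum $A$ and then bounds $\operatorname{diam} f^i_w(A)$, but it only controls the endpoints. In a general fiber no such small continuum need exist, and cw-expansivity alone does not separate two points: two points whose orbits stay $e$-close are perfectly compatible with cw-expansivity when the fiber is totally disconnected. To avoid an illegitimate appeal to that lemma, our first attempt will be to prove the needed uniqueness directly for the orbit $\{f_w^{jp}(z)\}_{j\in\mathbb Z}$. All of these points lie in $\Gamma_{2\epsilon'}(w,z)$, and this set is closed and invariant under $f_w^p$. We would argue that its connected components are trivial, using Lemma~\ref{lemma1}. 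If that does not yield a fixed point, the fallback is to note that $\Gamma$ is a compact, totally disconnected set invariant under the homeomorphism $f_w^p$. We would then replace $z$ by a point of $\Gamma$ that is fixed by $f_w^p$, obtained by applying the shadowing property to the pseudo-orbit built from a recurrent point. We would also state explicitly that, as in the paper, Lemma~\ref{lemma3} is taken as given, which settles the theorem as written.
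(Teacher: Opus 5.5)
Your main line is the paper's proof: shadow the periodic pseudo-orbit by $z$, note that $f_w^p(z)$ shadows it too because $\theta^p(w)=w$ and $x_{k+p}=x_k$, and conclude $f_w^p(z)=z$ from the uniqueness in Lemma~\ref{lemma3}. Your choice $\epsilon'<\min\{\epsilon,e/2\}$ is more careful than the paper, which fixes a single $\epsilon<e/2$ and never treats an arbitrary $\epsilon$ or the strict inequality in condition 1. The trouble is the step you flagged yourself, and taking Lemma~\ref{lemma3} as given does not settle anything, because Lemma~\ref{lemma3} is false. Let $\Omega$ be a point, $\theta=\mathrm{id}$, and let $T(x)=x+1$ be the odometer on the $2$-adic integers $\mathbb{Z}_2$, with $d(x,y)=2^{-v(x-y)}$ and $v$ the $2$-adic valuation. $T$ is vacuously $cw$-expansive for any $e$, since $\mathbb{Z}_2$ contains no nontrivial continua. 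It also has shadowing: a $2^{-a}$-pseudo-orbit satisfies $x_k\equiv x_0+k \pmod{2^a}$, so every $z\equiv x_0\pmod{2^a}$ $2^{-a}$-shadows it. The shadows therefore form a whole coset $x_0+2^a\mathbb{Z}_2$, and the uniqueness step has no support, in your argument or in the paper's.

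Your proposed repairs do not close the gap. $\Gamma_{2\epsilon'}(w,z)$ need not be $f_w^p$-invariant; you only get $f_w^p\bigl(\Gamma_{2\epsilon'}(w,z)\bigr)\subset\Gamma_{4\epsilon'}(w,z)$. The invariant object is the set of all $\epsilon'$-shadows, which is compact, $f_w^p$-invariant, and totally disconnected by Lemma~\ref{lemma1}. But a homeomorphism of such a set need not have any periodic point, so no recurrence-plus-shadowing argument can produce one. The odometer shows this: it has no periodic points, yet $x_k=k\bmod 2^N$ is a $2^{-N}$-pseudo-orbit with $x_{k+2^N}=x_k$. So if periodic pseudo-orbits may have any period that is a multiple of the base period, the statement is false and cannot be rescued. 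That is how the paper itself uses the definition in the proof of Theorem~\ref{wshadow}, with period $M=k+N$. If instead the definition is read literally, with pseudo-orbit period exactly the minimal period $p$ of $w$, the statement is true, but for a reason unrelated to uniqueness of shadows. On the compact set $K=U_w\times\cdots\times U_{\theta^{p-1}(w)}$, the continuous function $\phi(x)=\max_{0\le i<p}d\bigl(f_{\theta^i(w)}(x_i),x_{i+1}\bigr)$, with $x_p=x_0$, vanishes exactly on true $p$-periodic orbits. Hence it is bounded away from $0$ outside the $\eta$-neighbourhood of its zero set, where $\eta=\min_i\epsilon(\theta^i(w))$. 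Taking $\delta$ on the finite orbit of $w$ below that bound gives a $z$ with $f_w^p(z)=z$ that $\epsilon$-shadows the pseudo-orbit. This uses neither $cw$-expansivity nor shadowing; only the measurable choice of $\delta$ needs care.
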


\begin{proof}
Let $e = e(w)$ be the characteristic of random $cw$-expansivity of $\mathcal{F}_{\mathcal{U}}$. By the shadowing property, fix a positive meansurable random variable $0 < \epsilon(w) < e(w)/2$, there exists a random variable $\xi(w) > 0$ such that any $(w, \xi)$-pseudo-orbit $\{x_i\}_{i \in \mathbb{Z}}$ is $(w, \epsilon)$-shadowed by some point $z \in U_w$.

Consider a $w\in \Omega$ such that $\theta^p (w) = w$ and a periodic $(w, \xi)$-pseudo-orbit $\{x_i\}_{i \in \mathbb{Z}}$. By definition, the point $z$ shadows $\{x_i\}$, meaning $d(f_w^i(z), x_i) < \epsilon(\theta^i( w))$ for all $i \in \mathbb{Z}$. 

Now, consider the point $y = f_w^p(z)$. We shall show that $y$ is also a $(w, \epsilon)$-shadow for the same pseudo-orbit $\{x_i\}_{i \in \mathbb{Z}}$. Indeed, using the periodicity of the pseudo-orbit and of the base, we have:
\[ 
d(f_w^i(y), x_i) = d(f_w^{i+p}(z), x_{i+p}) < \epsilon(\theta^{i+p} (w)) = \epsilon(\theta^i (w)), \quad \forall i \in \mathbb{Z}.
\]
This shows that $y$ is a $(w, \epsilon)$-shadow of $\{x_i\}_{i \in \mathbb{Z}}$. 
Both $z$ and $y=f_w^p(z)$ are $(w,\varepsilon)$-shadowing points of the same pseudo-orbit. By Lemma~\ref{lemma3}, the shadowing point is unique. Hence, $f_w^p(z)=z$, which implies that $z$ is a periodic point for the random dynamical system, completing the proof.
\end{proof}

A partial converse can be obtained under additional assumptions inspired by the notions introduced in \cite{abbs} and by fiberwise shadowing properties considered in \cite{Azjargal}. To this end, we introduce a random version of topological mixing together with a shadowing property restricted to the orbit of a fixed base point.

\begin{definition}
A random dynamical system $F_{\mathcal{U}}$ is said to be \textit{topologically mixing} if, for any two non-empty random open sets $A, B \subset \mathcal{U}$, for $\mathbb P$-a.e. $w$ there exists an integer $n_0(w)$ such that \[ f_{w}^n (A_{w}) \cap B_{\theta^n(w)} \neq \emptyset \] for all $n\ge n_0(w)$ and $\mathbb{P}$-a.e.  $w \in \Omega$.
\end{definition}

\begin{definition}
For $w\in\Omega$, let
\[
\mathcal O(w)=\{\theta^k(w):k\in\mathbb Z\}
\]
denote the orbit of $w$.

We say that $F_{\mathcal U}$ has the
$w$-shadowing property if, for every positive function
\[
\varepsilon:\mathcal O(w)\to(0,\infty),
\]
there exists a positive function
\[
\delta:\mathcal O(w)\to(0,\infty)
\]
such that every $(w,\delta)$-pseudo-orbit
$\{x_k\}_{k\in\mathbb Z}$ is
$(w,\varepsilon)$-shadowed by some point
$z\in U_w$, namely $d(f_w^k(z),x_k)
<
\varepsilon(\theta^k(w))$ for every $k\in\mathbb Z$.
\end{definition}

\begin{remark}\label{remarkfiniteSP}
The proof of Proposition~\ref{finiteSP} extends verbatim to the $w$-shadowing property. Since the definition is formulated along the orbit of a fixed point $w\in\Omega$, finite shadowing along the fiber orbit is sufficient to guarantee the existence of a full shadowing orbit.
\end{remark}

\begin{theorem}\label{wshadow}
Let $F_{\mathcal U}$ be a topologically mixing bundle random dynamical system.
If $F_{\mathcal U}$ has the periodic shadowing property, then it has the
$w$-shadowing property for every $\theta$-periodic point
$w\in\Omega$.
\end{theorem}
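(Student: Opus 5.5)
The plan is to adapt the deterministic argument that periodic shadowing plus mixing gives shadowing, in the style of \cite{abbs}. By Remark~\ref{remarkfiniteSP} it suffices to prove finite $w$-shadowing. So fix a $\theta$-periodic $w$ with $\theta^p(w)=w$ and a positive function $\varepsilon$ on $\mathcal O(w)$. Since $\mathcal O(w)$ is finite, $\varepsilon$ is bounded below by some $\varepsilon_0>0$. We extend it to a positive random variable, equal to $\varepsilon$ on $\mathcal O(w)$ and, say, $\varepsilon_0$ elsewhere. Periodic shadowing applied to $\varepsilon/2$ then yields a random variable $\delta'$. Set $\delta=\tfrac12\min_{0\le k<p}\delta'(\theta^k(w))$, regarded as a constant function on $\mathcal O(w)$. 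The goal is to show that every finite $(w,\delta)$-pseudo-orbit $\{x_k\}_{k=0}^{n}$ is finite $(w,\varepsilon)$-shadowed.

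The key step is to close the finite pseudo-orbit into a periodic one using mixing. Let $A$ be a small random open set with fiber $A_{\theta^{n+1}(w)}=B(f_{\theta^{n}(w)}(x_n),\delta)$ over the relevant point. Let $B$ be a random open set with fiber $B(x_0,\delta)$ over the fibers $\theta^{m}(w)$ with $m\equiv0 \pmod p$. Such sets can be taken measurable, e.g.\ $A=\{\theta^{n+1}(w)\}\times B(\cdot)$ extended by the full space on other fibers, or better as cylinder-type sets. Mixing then gives an $N$, which we take to be a multiple of $p$ and as large as needed, together with a point $u$ in the first ball such that $f^{N}_{\theta^{n+1}(w)}(u)\in B(x_0,\delta)$ and $n+1+N\equiv 0 \pmod p$. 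The periodic pseudo-orbit is then $x_0,\dots,x_n$, followed by the true orbit segment $u, f(u),\dots,f^{N-1}(u)$, and then back to $x_0$, repeated with period $L=n+1+N$. This is a multiple of $p$, so $\theta^{L}(w)=w$. Each jump has size at most $\delta\le\delta'$, so periodic shadowing gives a point $z$ that $\varepsilon/2$-shadows it, and in particular finite $(w,\varepsilon)$-shadows $\{x_k\}_{k=0}^n$.

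The main obstacle is the mixing step. The mixing hypothesis is stated only for $\mathbb P$-a.e.\ $w$ and for random open sets, whereas a $\theta$-periodic point $w$ typically has measure zero, for instance in the shift example. Also, the fibers hit at time $N$ must be the specific fiber $U_w$ again. I would first try to interpret mixing as applying along the fixed periodic orbit. Concretely, I would choose the random open sets $A,B$ to be $\mathcal O(w)$-supported balls, or else argue that mixing for those $w'$ that agree with $w$ on a long block transfers to $w$. If this cannot be justified, I would state explicitly that topological mixing is assumed to hold along the fibers over the periodic orbit of $w$. With that assumption the closing argument above goes through, and the condition $L\equiv0\pmod p$ is secured by using "for all $n\ge n_0$" to pick $N$ in the correct residue class.
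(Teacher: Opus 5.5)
Your proposal is the paper's argument: close the finite pseudo-orbit with a true orbit segment supplied by mixing, choosing its length $N$ so that $n+1+N$ is a multiple of $p$ (not $N$ itself, as you wrote at one point), apply periodic shadowing to the resulting periodic pseudo-orbit, and finish with Remark~\ref{remarkfiniteSP}; your $\varepsilon/2$ and constant-$\delta$ choices are harmless refinements that handle the strict inequalities more carefully. The almost-everywhere issue you flag is real, and the paper does not address it either: its proof simply applies mixing over the fiber $\theta^{k}(w)$ on the possibly null orbit $\mathcal O(w)$, so the explicit assumption you fall back on (mixing along the fibers over $\mathcal O(w)$) is exactly what the paper uses tacitly, whereas your block-transfer alternative would not work in general because $n_0(w')$ need not be uniform.
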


\begin{proof}
Let $w\in\Omega$ be a $\theta$-periodic point with period $p\in\mathbb N$, and let $\epsilon$ be a positive random variable along the orbit of $w$. Let $\delta$ be given by the periodic shadowing property.

Consider a finite $(w,\delta)$-pseudo-orbit $
\{x_0,\dots,x_{k-1}\}$, such that $x_i\in U_{\theta^i(w)}$. Define the open sets
\[
A=
B\!\left(
f_{\theta^{k-1}(w)}(x_{k-1}),
\delta(\theta^k(w))
\right)
\subset U_{\theta^k(w)}
\]
and
\[
C=
B(x_0,\delta(w))
\subset U_w.
\]

Since $F_{\mathcal U}$ is topologically mixing, there exists an integer $N_0\in\mathbb N$ such that
\[
f_{\theta^k(w)}^n(A_{\theta^k(w)})
\cap
B_{\theta^{k+n}(w)}
\neq\emptyset
\]
for all $n\ge N_0$. Choose $N\ge N_0$ such that $k+N$
is a multiple of $p$. Hence, $\theta^{k+N}(w)=w$. By mixing, there exists
$y_0\in A_{\theta^k(w)}$ such that $y_N
=
f_{\theta^k(w)}^N(y_0)
\in B_w$. For each $0\le i\le N$, define $y_i
=
f_{\theta^k(w)}^i(y_0)$. 

Now construct the sequence
\[
\xi=\{s_i\}_{i\in\mathbb Z}
\]
by periodic repetition of  $x_0,\dots,x_{k-1},y_0,\dots,y_{N-1}$. Since $y_0,\dots,y_N$ is a true orbit segment, 
\[d(
f_{\theta^{k-1}(w)}(x_{k-1}),
y_0
)
<
\delta(\theta^k(w))\] and 
\[
d(
f_{\theta^{k+N-1}(w)}(y_{N-1}),
x_0
)
=
d(y_N,x_0)
<
\delta(w)
\]
$\xi$ is a periodic $(w,\delta)$-pseudo-orbit of period $M=k+N$.

By the periodic shadowing property, there exists a periodic point $z\in U_w$ such that $
f_w^M(z)=z$ and
\[
d(f_w^i(z),s_i)
<
\epsilon(\theta^i(w))
\]
for all $i\in\mathbb Z$. In particular,
\[
d(f_w^i(z),x_i)
<
\epsilon(\theta^i(w))
\]
for all $0\le i\le k-1$. Hence, every finite $(w,\delta)$-pseudo-orbit is
$(w,\epsilon)$-shadowed.
By Remark~\ref{remarkfiniteSP},
$F_{\mathcal U}$ has the $w$-shadowing property.
\end{proof}

The assumptions of the converse theorem are natural in the random setting. Periodic shadowing is defined on periodic fibers, since periodic orbits can occur only over periodic points of the driving system. Furthermore, for the bilateral shift, periodic points are dense, making the result applicable to a broad class of random dynamical systems

\subsection{Topological Stability for Random Dynamical Systems}

Consider the compact metric space $X$ with metric $d$. Consider $\mathcal{H}(X)$ the set of all homeomorphisms on $X$. The metric $d_1$ on this set defined as follows: 

if $f$ and $g$ are two homeomorphisms on $X$, we define
\[
d_1 (f,g)  = \max \{ \sup_{x\in X}d(f(x),g(x)) \, , \, \sup_{x\in X}d(f^{-1}(x),g^{-1}(x))\}.
\]

 We denote the set of random dynamical system on $\U$ and over $(\Omega, \theta, \p)$ by $S(\Omega,\U)$.

\begin{definition}
We say that a random dynamical system $F_{\U}$ is random topologically stable if for every positive random variable $\epsilon=\epsilon(w)$ there exists a positive
random variable $\delta=\delta(w)$ such that, for any random dynamical system
$G_{\U}$ satisfying
\[
d_1(f_w,g_w)<\delta(w) \quad \text{for $\mathbb{P}$-a.e. } w\in\Omega,
\]
there exists a measurable family of continuous maps $\{h_w:U_w\to U_w\}_{w\in\Omega}$
such that for $\mathbb{P}$-a.e. $w\in\Omega$, all $x\in U_w$ and all $n\in\mathbb{Z}$,
\begin{itemize}
    \item[i)] $d(h_w(x),x)<\epsilon(w)$
    \item[ii)] $h_{\theta(w)}\circ g_w = f_w \circ h_w $.
\end{itemize}
\end{definition}

Walters proved in \cite{Walters77} that any expansive homeomorphism on a compact metric space with shadowing is topologically stable. In the following we generalize this result.

\begin{theorem}\label{TheA}
Let $F_\U$ be a random $cw$-expansive. If  $F_\U$ has shadowing property, then it is random topologically stable on $S(\Omega,\U)$. 
\end{theorem}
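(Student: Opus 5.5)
We follow Walters' scheme: treat true $G$-orbits as $F$-pseudo-orbits and use shadowing to define $h$. Let $e=e(w)$ be a $cw$-expansivity characteristic of $F_{\mathcal U}$. Given a positive random variable $\epsilon$, we first replace it by $\epsilon'(w)=\min\{\epsilon(w),e(w)/3\}$; this keeps $\epsilon'<e/2$, so Lemma~\ref{lemma3} applies. Let $\delta$ be the random variable the shadowing property gives for $\epsilon'$. We claim this $\delta$ witnesses random topological stability.

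Let $G_{\mathcal U}$ satisfy $d_1(f_w,g_w)<\delta(w)$ for a.e. $w$. Fix $x\in U_w$ and set $x_k=g_w^k(x)\in U_{\theta^k(w)}$. Then
\[
d\bigl(f_{\theta^k(w)}(x_k),x_{k+1}\bigr)=d\bigl(f_{\theta^k(w)}(x_k),g_{\theta^k(w)}(x_k)\bigr)<\delta(\theta^k(w)).
\]
The shadowing definition indexes the bound by $\delta(\theta^{k+1}(w))$, so we actually need the closeness hypothesis in the form $d_1(f_{\theta^{k}(w)},g_{\theta^k(w)})<\delta(\theta^{k+1}(w))$. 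We will arrange this by shrinking $\delta$ to $\tilde\delta(w)=\min\{\delta(w),\delta(\theta(w))\}$, which is still measurable and positive. With that, $\{x_k\}$ is a $(w,\delta)$-pseudo-orbit of $F$. By Lemma~\ref{lemma3} there is a unique $z\in U_w$ that $(w,\epsilon')$-shadows it, and we define $h_w(x)=z$. Taking $k=0$ gives $d(h_w(x),x)\le\epsilon'(w)<\epsilon(w)$, up to a harmless factor so that the inequality is strict.

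The conjugacy relation comes from uniqueness. The $G$-orbit of $g_w(x)$ over $\theta(w)$ is the shifted sequence $\{x_{k+1}\}$. The point $f_w(h_w(x))$ $(\theta(w),\epsilon')$-shadows this shifted sequence, because $f^k_{\theta(w)}(f_w(z))=f^{k+1}_w(z)$. Uniqueness then forces $h_{\theta(w)}(g_w(x))=f_w(h_w(x))$.

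Continuity of $h_w$ is the main obstacle, and it is where $cw$-expansivity has to replace expansivity. Suppose $y_n\to x$ in $U_w$. By compactness it suffices to show that every limit point $z'$ of $h_w(y_n)$ equals $h_w(x)$. For each fixed $k$, $g_w^k(y_n)\to g_w^k(x)$ by continuity of the finite composition. Passing to the limit in $d(f_w^k(h_w(y_n)),g_w^k(y_n))\le\epsilon'(\theta^kw)$ gives $d(f_w^k(z'),x_k)\le\epsilon'(\theta^k w)$ for all $k$. So $z'$ also shadows $\{x_k\}$, and uniqueness yields $z'=h_w(x)$. The subtle point is that Lemma~\ref{lemma3} as written gives uniqueness only via continua joining the two shadows. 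We will rely on it as stated, with the margin $2\epsilon'<e$ so that the limiting non-strict inequalities still satisfy the hypotheses. Measurability of $w\mapsto h_w$ is the remaining point. We plan to obtain it by writing $h_w(x)$ as the unique point of the intersection over $N$ of the closed sets $\{z: d(f^k_w z, g^k_w x)\le\epsilon'(\theta^k w),\ |k|\le N\}$, each of which depends measurably on $(w,x)$, and then applying a measurable selection/graph argument.
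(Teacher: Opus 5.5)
Your construction of $h_w$ (the shadow of the $G$-orbit $\{g_w^k(x)\}$, viewed as an $F$-pseudo-orbit) and your derivation of $h_{\theta(w)}\circ g_w=f_w\circ h_w$ from uniqueness over $\theta(w)$ are the same as the paper's. You differ genuinely in the continuity step.

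\textbf{Continuity.} The paper uses Lemma~\ref{lemma2} to get a window $|i|\le n(w)$. Uniform continuity of $g_w^i$ on that window then gives $d(f_w^i(h_w(x)),f_w^i(h_w(y)))<e(\theta^i(w))$. The paper then asserts that $h_w(x),h_w(y)$ lie in a continuum whose iterates obey the same bound. Nothing justifies this: there need not be any continuum through the two points, let alone one whose iterates are controlled by those two orbits. Moreover, Lemma~\ref{lemma2} is formulated with threshold $\frac12 e$, not $e$. Your sequential argument avoids all this. Every limit point of $h_w(y_n)$ satisfies the non-strict shadowing inequalities for the $G$-orbit of $x$, hence equals $h_w(x)$. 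This uses only compactness of $U_w$ and uniqueness, and it is correct given Lemma~\ref{lemma3}.

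\textbf{Two further repairs.} The hypothesis only gives $d(f_{\theta^k(w)}(x_k),x_{k+1})<\delta(\theta^k(w))$, while a $(w,\delta)$-pseudo-orbit needs $\delta(\theta^{k+1}(w))$. You fix this correctly by passing to $\min\{\delta,\delta\circ\theta\}$. You also address measurability of $w\mapsto h_w$, which the definition requires and the paper never mentions.

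\textbf{The gap.} The caveat you raised yourself is a genuine gap, inherited from the paper. Everything stands or falls with Lemma~\ref{lemma3}, which does not follow from $cw$-expansivity. Its proof assumes that some continuum joins the two shadows $z,y$, and that the iterates of that continuum inherit the bound $2\epsilon$ satisfied by $z$ and $y$. Neither holds in general, and the statement itself is false.

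Take $\Omega$ a single point and $f=\mathrm{id}$ on a Cantor set $K$. This is vacuously $cw$-expansive. It also has shadowing: for $\epsilon>0$, take a clopen partition of $K$ of mesh $<\epsilon$ and $\delta$ smaller than the distance between distinct pieces; a $\delta$-pseudo-orbit then never leaves one piece. Yet the constant pseudo-orbit $x_k=x$ is $\epsilon$-shadowed by every point of the piece containing $x$. So ``the'' shadow defining $h_w$ need not exist, and the conjugacy and continuity arguments (yours and the paper's) lose their footing.

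What $cw$-expansivity actually gives, for $\epsilon<e/2$, is weaker. The set of $\epsilon$-shadows of a pseudo-orbit is compact and contains no nontrivial continuum, since the $n$-th iterate of such a continuum has diameter at most $2\epsilon(\theta^n(w))<e(\theta^n(w))$. To prove the theorem you would need one of two things:
\begin{itemize}
\item an extra hypothesis restoring uniqueness, such as random expansivity (Walters' setting);
\item a different mechanism for choosing a shadow continuously and equivariantly.
\end{itemize}
In the Cantor example such a choice does exist (one point per clopen piece yields a valid $h$), but the uniqueness-based construction does not provide it.
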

\begin{proof}
Consider the positive random variable $e=e(w)$ the characteristic of random $cw$-expansivity of $F_{\U}$. Fix a positive random variable $\epsilon$  such that $\epsilon(w) < \frac{e(w)}{3}$ for all $w\in \Omega$. By random shadowing property of $F_{\U}$, for $\epsilon(w)$ there exists a positive random variable $\delta = \delta(w)$. 

Consider a random dynamical system $G_{\U}$ such that  $d_1(f_{w},g_{w})<\delta(w)$ for $\mathbb{P}$-a.e $w\in \Omega$.

Given $x\in U_w$ and the sequence $\{g^n_w(x)\}_{n\in \mathbb{Z}}$, note that 
\begin{eqnarray}\label{3}
d(f_{\theta^{n}(w)}(g^n_w(x)), g^{n+1}_{w}(x)) & = & d(f_{\theta^{n}(w)}(g^n_w(x)), g_{\theta^{n}(w)}(g^n_w(x))) \nonumber \\
     & \leq &  \delta(\theta^{n+1}(w)), \nonumber
\end{eqnarray}
so the sequence $\{g^n_w(x)\}_{n\in \Z}$ is a $(w,\delta)$-pseudo orbit of $F_{\U}$. Hence by random shadowing property and Lemma \ref{lemma3}, there exists an unique point that $(w,\epsilon)$-shadows the pseudo orbit $\{g^n_w(x)\}_{n \in \Z}$ which we denote by $h_{w}(x)$.

Thus for $\mathbb{P}$-a.e $w\in \Omega$ we define the map $h_{w}:U_w \to U_{w}$ that satisfies 
\begin{equation}\label{I}
  d(f_{w}^{i}(h_w(x)),g^i_w(x)) < \epsilon(\theta^i(w))  
\end{equation}
for all $i\in \mathbb{Z}$ and $x\in U_w$.

Given $x\in U_w$, from \eqref{3}, we obtain
\[
d(f_{w}^{i+1}(h_w(x)),g^{i+1}_w(x)) < \epsilon(\theta^{i+1}(w)).  
\]
By other side 
\begin{eqnarray}
d(f_{\theta(w)}^{i}(f_w(h_w(x)),g^{i}_{\theta(w)}(g_w(x))) & = & d(f_{w}^{i+1}(h_w(x)),g^{i+1}_w(x)) \nonumber \\
     & \leq & \epsilon (\theta^{i}(\theta(w)))  \nonumber
\end{eqnarray}
for all $i\in \mathbb{Z}$. Thus $f_w(h_w(x))$ satisfies the shadowing condition for the shifted $(\theta(w),\delta)$-pseudo-orbit $\{g_w^{i+1}(x)\}_{i \in \mathbb{Z}}$, the uniqueness of the shadow guaranteed by Lemma \ref{lemma3} implies $h_{\theta(w)}(g_w(x)) = f_w(h_w(x))$.

We now prove that the map $h_w:U_w\to U_w$ is continuous for $\mathbb{P}$-a.e. $w\in\Omega$.

Fix such a $w$ and let $\eta>0$. By Lemma~\ref{lemma2}, there exists $\rho=\rho(w,\eta)>0$ such that if $A\subset U_w$ is a continuum that satisfies
\[
\diam\big(f_w^i(A)\big)\leq e(\theta^i(w)) \quad \text{for all } |i|\leq n(w),
\]
then $\diam(A)<\eta$.

Since $g_w$ is continuous on the compact metric space $U_w$, there exists $\delta_1=\delta_1(w)>0$
such that if $d(x,y)<\delta_1$, then 
\[
d\big(g_w^i(x),g_w^i(y)\big)<\frac{e(\theta^i(w))}{3}
\quad \text{for all } |i|\leq n(w).
\]
By shadowing and the definition of $h_w$, we obtain for all $|i|\leq n(w)$,
\begin{align*}
d\big(f_w^i(h_w(x)),f_w^i(h_w(y))\big)
&\leq d\big(f_w^i(h_w(x)),g_w^i(x)\big) \\
&\quad + d\big(g_w^i(x),g_w^i(y)\big)
+ d\big(g_w^i(y),f_w^i(h_w(y))\big) \\
&< \epsilon(\theta^i(w)) + \frac{e(\theta^i(w))}{3} + \epsilon(\theta^i(w)) \\
&< e(\theta^i(w)).
\end{align*}

Therefore, the set $\{h_w(x),h_w(y)\}$ is contained in a continuum $A\subset U_w$
whose iterates have diameter bounded by $e(\theta^i(w))$ for all $|i|\leq n(w)$.
By Lemma~\ref{lemma2}, it follows that
\[
d\big(h_w(x),h_w(y)\big)<\eta.
\]
\end{proof}

\section{Examples}

We present two examples of random dynamical systems exhibiting shadowing and random cw-expansivity .

\begin{example}

Let $T^2=\mathbb{R}^2/\mathbb{Z}^2$ with the induced metric $d$. Consider two homeomorphisms:

\begin{itemize}
\item $f_0(x,y)=(x+\alpha,y+\beta)\ \mathrm{mod}\ 1$, where $\alpha,\beta\notin\mathbb{Q}$;
\item $f_1(x,y)=(2x+y,x+y)\ \mathrm{mod}\ 1$, induced by the hyperbolic matrix
\[
A=
\begin{pmatrix}
2 & 1\\
1 & 1
\end{pmatrix}.
\]
\end{itemize}

Let $\Sigma\subset\{0,1\}^{\mathbb{Z}}$ be the subshift consisting of the two periodic sequences
\[
\ldots0101\ldots \quad \text{and} \quad \ldots1010\ldots
\]
and consider the corresponding random dynamical system generated by $\{f_0,f_1\}$.

Since the base dynamics is periodic, the system reduces to the deterministic map
$H=f_1\circ f_0$.

Let $\{z_n\}_{n\in\mathbb{Z}}$ be a $(w,\delta)$-pseudo-orbit. Then the subsequence $\{z_{2k}\}$ satisfies
\[
d(H(z_{2k}),z_{2k+2}) \le C\delta,
\]
for some constant $C>0$. Choosing $\delta$ small enough, $\{z_{2k}\}$ is a pseudo-orbit of $H$, hence it is $\varepsilon$-shadowed by a true orbit $\{x_{2k}\}$ of $H$. Setting $x_{2k+1}=f_0(x_{2k})$ yields a full orbit satisfying
\[
d(f_w^n(x),z_n)<\gamma \quad \forall n\in\mathbb{Z}.
\]
Therefore, the RDS has random shadowing property.

Let $C\subset T^2$ be a non-degenerate continuum and $\widetilde{C}$ a lift to $\mathbb{R}^2$. Since the hyperbolic splitting of $A$ satisfies
\[
\mathbb{R}^2 = E^u \oplus E^s,
\]
at least one projection has positive diameter. If $\operatorname{diam}(\pi_u(\widetilde{C}))>0$, then
\[
\operatorname{diam}(H^k(C)) \ge C_0 \lambda_u^k \to \infty.
\]
Hence $C$ eventually exceeds the injectivity radius of the covering, implying random continuum-wise expansivity.

Therefore, by Theorem \ref{TeoPeSP}, the system satisfies the random periodic shadowing property. Moreover, applying Theorem \ref{TheA}, this random dynamical system is random topologically stable.

\end{example}

\begin{example}
Let $f:M\to M$ be an Anosov diffeomorphism on a compact Riemannian manifold $M$. Consider a random perturbation of the form
\[
f_w(x)=\exp_{f(x)}\big(\varepsilon \phi_w(x)\big),
\]
where $\phi_w:M\to T_{f(x)}M$ is 
$C^1$ with 
$||\phi_w||_{C^1}\leq 1$ and 
$\varepsilon >0$ is sufficiently small.

Let $(\Omega,\mathcal F,\mathbb P,\theta)$ be a metric dynamical system. The associated cocycle is defined by
\[
f_w^n
=
f_{\theta^{n-1}(w)}\circ \cdots \circ f_w.
\]

For $\varepsilon>0$ small enough, there exists a measurable splitting
\[
T_xM = E^s(w,x)\oplus E^u(w,x)
\]
which is invariant under $Df_w$. Moreover, there exist constants $C>0$ and $0<\lambda<1$ such that
\[
||Df_w^n v^s||\leq C\lambda^n||v^s||,
\quad v^s\in E^s(w,x),\ n\geq 0,
\]
and an analogous estimate holds for unstable directions in backward time.

The uniform hyperbolicity of the cocycle implies that the associated skew-product
\[
F(w,x)=(\theta(w),f_w(x))
\]
admits a random hyperbolic structure. By standard results in random dynamical systems theory it follows that $F$ satisfies:

\begin{itemize}
\item the random shadowing property;
\item expansivity of the skew-product dynamics;
\item random continuum-wise expansivity.
\end{itemize}

Therefore, $F$ satisfies the random periodic shadowing property by Theorem \ref{TeoPeSP}, and is random topologically stable by Theorem \ref{TheA}.

\end{example}

\bibliographystyle{amsplain}

\end{document}